\newtheorem{thmalpha}{Theorem}  
\newtheorem{coralpha}[thmalpha]{Corollary}  
\newtheorem{thm}{Theorem}[section] 		
\newtheorem{theorem}[thm]{Theorem}		
\newtheorem{prop}[thm]{Proposition}		
\newtheorem{lem}[thm]{Lemma}			
\newtheorem{cor}[thm]{Corollary}		
\newtheorem*{question*}{Question}
\theoremstyle{definition}	
\newtheorem{remark}[thm]{Remark}			
\newtheorem{definition}[thm]{Definition}	
\newtheorem{rem}[thm]{Remark}				
\newtheorem{example}[thm]{Example}			
\newtheorem*{rem*}{Remark}			
\newtheorem*{notation*}{Notation}	
\newtheorem*{ack}{Acknowledgements}	
\numberwithin{equation}{section}
\newenvironment{eqnenumerate}
{\enumerate[
	label=(\thesection.\arabic*),
	before=\setcounter{enumi}{\value{equation}},
	after=\setcounter{equation}{\value{enumi}},
	]}
{\endenumerate}
\DeclareMathOperator{\Int}{Int}
\DeclareMathOperator{\eucl}{eucl}
\DeclareMathOperator{\length}{Length}
\DeclareMathOperator{\id}{id}
\DeclareMathOperator{\Sym}{Sym}
\DeclareMathOperator{\inte}{int}
\DeclareMathOperator{\sgn}{sgn}
\newcommand{\bR}{\mathbb{R}}
\newcommand{\ra}{\rightarrow}
\newcommand{\bN}{\mathbb{N}}
\newcommand{\KL}{K\L }
\newcommand{\R}{\mathbb{R}}
\newcommand{\interior}[1]{%
	{\kern0pt#1}^{\mathrm{o}}%
}
\newsavebox\myboxA
\newsavebox\myboxB
\newlength\mylenA
\newcommand*\xoverline[2][0.75]{%
	\sbox{\myboxA}{$\m@th#2$}%
	\setbox\myboxB\null
	\ht\myboxB=\ht\myboxA%
	\dp\myboxB=\dp\myboxA%
	\wd\myboxB=#1\wd\myboxA
	\sbox\myboxB{$\m@th\overline{\copy\myboxB}$}
	\setlength\mylenA{\the\wd\myboxA}
	\addtolength\mylenA{-\the\wd\myboxB}%
	\ifdim\wd\myboxB<\wd\myboxA%
	\rlap{\hskip 0.5\mylenA\usebox\myboxB}{\usebox\myboxA}%
	\else
	\hskip -0.5\mylenA\rlap{\usebox\myboxA}{\hskip 0.5\mylenA\usebox\myboxB}%
	\fi}
\newcommand{\pref}[2]{\hyperref[#2]{#1 \ref*{#2}}}
\begin{document}
\title[{\KL} Functions and Mapping Cylinder Neighborhoods]{ Kurdyka--\L ojasiewicz functions and Mapping Cylinder Neighborhoods}
\date{\today}

\author[D.~Cibotaru]{Daniel Cibotaru}
	\address[D.~Cibotaru]{Departamento de Matem\'atica, Universidade Federal do Cear\'a, Fortaleza, Brazil.}
	\email{\href{mailto:daniel@mat.ufc.br}
		{daniel@mat.ufc.br}}

\author[F.~Galaz-Garc\'ia]{Fernando Galaz-Garc\'ia}
\address[F.~Galaz-Garc\'ia]{Department of Mathematical Sciences, Durham University, United Kingdom.}
\email{\href{mailto:fernando.galaz-garcia@durham.ac.uk}
	{fernando.galaz-garcia@durham.ac.uk}}

\subjclass[2010]{57R99}

\keywords{mapping cylinder neighborhood, relative manifold, Kurdyka--\L ojasiewicz function, Alexander horned sphere}

\begin{abstract}
	Kurdyka--\L ojasiewicz (\KL) functions are real-valued functions characterized by a differential inequality involving the norm of their gradient. This class of functions is quite rich, containing objects as diverse as subanalytic, transnormal or Morse functions. 
	We  prove that the zero locus of a Kurdyka--\L ojasiewicz function admits a mapping cylinder neighborhood. This implies, in particular, that  wildly embedded topological $2$-manifolds in $3$-dimensional Euclidean space, such as Alexander horned spheres, do not arise as the zero loci of \KL{} functions.
	
\end{abstract}
\maketitle

\setcounter{tocdepth}{1}
\tableofcontents



\section{Introduction}

A basic result of  differential topology says that any closed subset of a smooth manifold $M$, no matter how complicated, can arise as the zero locus of a smooth, nonnegative real-valued function on $M$.
On the other hand, if $M$ is analytic, the zero loci of  real analytic functions on $M$  form, of course, a more restricted class of closed sets whose properties have been intensively studied for more than a century. Surprisingly, topological restrictions on these sets come from a differential inequality: 
\L ojasiewicz  proved in \cite[Proposition 1, p.\ 92]{Lo1} that  a real analytic  function $f\colon \R^n\to \R$  satisfies, in a neighborhood of  a point $p\in f^{-1}(0)$,  the gradient inequality
\begin{equation}\label{EQ:Lo1} |\nabla_x f|\geq  C|f(x)|^{\theta}
\end{equation}
for some $C>0$ and $0<\theta<1$.

The class of functions  that satisfy  gradient inequalities similar to (\ref{EQ:Lo1})  is quite larger than that of real analytic functions on $\R^n$. Kurdyka provided a pivotal extension in \cite{K}, where he showed that real-valued functions whose graph belongs to an o-minimal structure $\mathcal{M}$ on $(\mathbb{R},+,\cdot)$ satisfy,  for some  constants $C,\rho>0$, the inequality
\begin{equation}\label{eq02} |\nabla_x (\psi \circ f)|\geq C\text{ for all }~x\in f^{-1}(0,\rho).
\end{equation}

In \eqref{eq02}, the function $\psi\colon [0,\rho)\ra[0,\infty)$ is strictly increasing, continuous, and $C^1$ on $(0,\rho)$,  with $\psi(0)=0$. In fact, $\psi$ can be chosen to belong to $\mathcal{M}$. Examples  of functions satisfying a gradient inequality such as (\ref{eq02}) can easily be  found beyond the world of  o-minimal structures. It is enough to recall that
the  distance function  to a closed, smooth submanifold of a Riemannian ambient manifold (see, for example,  \cite{Pet})  has a gradient of norm $1$ in a neighborhood of the submanifold. It is therefore unsurprising that functions satisfying such inequalities, henceforth called \emph{{\KL} functions}, have become the focus of stimulating research in the past two decades. This was motivated also by the interest coming from convex optimization,   complexity theory, and neural networks.  The article by Bolte, Daniilidis,  Ley, and Mazet \cite{BDLM},  a benchmark on the topic of K\L{} functions,  and the references therein, will  give the reader a broader view of their alternative characterizations and  applications.  The recently found counterexample to the Thom Gradient Conjecture \cite{DHL} for the class of K\L{}  functions  only  increases the intrigue surrounding them.


Our original interest in  K\L{} functions stemmed from the topological properties of  their zero locus.  In \cite{K},  Kurdyka  proved  that the zero locus of a K\L{} function is a strong deformation retract of a neighborhood using the natural deformation induced by the negative gradient flow. Having a neighborhood which is a strong deformation retract is the first step in proving stronger ``embedding'' properties of the zero locus, like, for example, that the pair $(M,f^{-1}(0))$ is a cofibration (see \cite{AS}). However, the cofibration property alone is still not enough to eliminate what one would perceive as pathological or wild embeddings of topologically nice spaces such as spheres. We therefore prove something stronger.


\begin{thmalpha}
	\label{T:MAIN_THM}
	The zero locus of a Kurdyka--\L ojasiewicz function has a regular mapping cylinder neighborhood. Moreover, this neighborhood is forward-time
	 invariant with respect to the negative gradient flow.
\end{thmalpha}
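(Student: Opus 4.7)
The plan is to construct the mapping cylinder neighborhood explicitly using the negative gradient flow of $F:=\psi\circ f$, exploiting the fact that the \KL\ inequality $|\nabla F|\geq C$ on $f^{-1}((0,\rho))$ forces every trajectory to have uniformly controlled length. First, along any negative gradient trajectory $\gamma$ of $F$ lying in $f^{-1}((0,\rho))$, one has
$$-\tfrac{d}{dt}F(\gamma(t))=|\nabla F(\gamma(t))|^{2}\geq C\,|\dot\gamma(t)|,$$
so integrating yields $\mathrm{length}(\gamma|_{[0,T]})\leq (F(\gamma(0))-F(\gamma(T)))/C$. Consequently every forward trajectory has length at most $F(\gamma(0))/C$ and converges to a unique limit $r(\gamma(0))\in Z:=f^{-1}(0)$. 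Extending by the identity on $Z$, I obtain a retraction $r\colon f^{-1}([0,\rho))\to Z$, continuous on the open set $f^{-1}((0,\rho))$ by continuous dependence on initial conditions and continuous at points $z\in Z$ as well: if $x_{n}\to z$, the trajectory from $x_{n}$ has length at most $F(x_{n})/C\to 0$, forcing $r(x_{n})\to z$.

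Next I would fix $b\in(0,\psi(\rho))$ small enough that $U:=F^{-1}([0,b))$ lies in $f^{-1}([0,\rho))$, and set $N:=F^{-1}(b)$. Since $|\nabla F|\geq C>0$ throughout $U\setminus Z$, the level set $N$ is a $C^{1}$ codimension-one submanifold, and $U$ is forward-invariant under the flow because $F$ decreases along it, yielding the second assertion of the theorem. I then define
$$\Phi\colon (N\times [0,1])\,\sqcup\, Z \longrightarrow \overline{U}$$
by $\Phi(z)=z$ for $z\in Z$ and by sending $(x,t)$ to the unique point on the trajectory through $x$ at which $F=tb$, so that $\Phi(x,1)=x$ and $\Phi(x,0)=r(x)$. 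Trajectories do not cross, so $\Phi$ is injective on $N\times(0,1]$; and every $y\in\overline U\setminus Z$ lies on a unique trajectory that, traced backwards, reaches $N$ in finite time (again by $|\nabla F|\geq C$). Hence $\Phi$ passes to a continuous bijection from the mapping cylinder $\mathrm{Cyl}(r|_{N})$ onto $\overline{U}$. Continuity of the inverse away from $Z$ follows by using $F$ as a transverse coordinate in a flow-box argument, and at points of $Z$ it reduces to the continuity of $r$ established above together with the fact that $F$ tends to zero as one approaches $Z$.

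The crux of the proof, and the only step where the \KL\ hypothesis is essential, is the continuity of $r$ at points of $Z$. Without a uniform length bound, trajectories of a gradient-like flow can accumulate on $Z$ in such a way that the limit depends discontinuously on the initial condition, and this discontinuity is precisely the obstruction that permits wild embeddings such as the Alexander horned sphere. The \KL\ inequality converts the identity $-dF/dt=|\nabla F|^{2}$ into the comparison $|\dot\gamma|\leq C^{-1}(-dF/dt)$, which is the indispensable ingredient that closes the construction; all other steps (level-set regularity, injectivity of the flow parametrization, backward reachability of $N$) then follow from standard ODE theory applied on the region where $|\nabla F|\geq C$.
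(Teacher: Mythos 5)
Your overall strategy---uniform length bounds extracted from the \KL{} inequality, the flow-limit retraction onto $Z$, and a flow parametrization of the mapping cylinder---is the same as the paper's. However, there is a genuine gap at the step where you take a single level set $N:=F^{-1}(b)$ as the top of the cylinder and assert that every $y\in\overline U\setminus Z$ lies on a trajectory that reaches $N$ in finite backward time. In the setting of the theorem, $(M,Z)$ is only a locally compact relative manifold: $M$ need not be complete, and the \KL{} data $(\rho,U,\Psi)$ are local, so $F=\Psi\circ f$ need not even be globally defined with a single constant $C$. A backward trajectory has finite length up to the moment its $F$-value would climb to $b$, but that finite-length curve can converge to a point of the metric boundary $\widehat M\setminus M$ before reaching level $b$, in which case it never meets $N$. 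For the same reason, your claim that every forward trajectory ``converges to a unique limit in $Z$'' requires the paper's restriction to the set $V=\{x\in f^{-1}([0,\rho)) : d(x,\partial M)>g(x)\}$, which is exactly what guarantees that the limit point of a finite-length trajectory lies in $M$ (hence in $Z$) rather than in the completion. The failure of a single level set to cut every trajectory is precisely the difficulty that Proposition \ref{P:existH} is built to overcome: the paper forms the leaf space $\mathcal{H}$ of the flow, covers it by relatively compact sets, and splices pieces of infinitely many level sets $f^{-1}(c_n)$ with $c_n\searrow 0$ via a partition of unity into a $C^1$ hypersurface $H=\widehat f^{-1}(1)$ meeting each trajectory exactly once, with $R\bigr|_H$ proper and surjective onto $\partial Z$. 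Your construction as written is valid only under additional hypotheses (e.g.\ $M$ complete and the \KL{} inequality uniform with a linear desingularization), not in the stated generality.

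A secondary, smaller issue: continuity of the retraction $r$ on the open part does not follow from continuous dependence on initial conditions alone, since that controls only finite-time behaviour and says nothing about the limit map; one must combine it with the uniform length bound near $Z$, as in the proof of Proposition \ref{propKu}. Your argument for continuity of $r$ at points of $Z$ itself is correct and coincides with the paper's.
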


A mapping cylinder neighborhood is, roughly speaking, a topological generalization for a closed subset of the notion of tubular neighborhood for a smooth submanifold; for a precise definition, see Definition \ref{DEF:MCN} in Subsection \ref{MCNsbs}. In this sense, one may think of Theorem~\ref{T:MAIN_THM} as an analog of the tubular neighborhood theorem for smooth submanifolds. 
  
 The question of which closed subsets $Z$ of a topological space $M$ have a mapping cylinder neighborhood  was intensively explored by topologists in the 1970s. In a celebrated result, obtained as an application of the theory of maps completion  (see  \cite{Qu1,Qu}), Quinn showed that if $Z$ is an absolute neighborhood retract (ANR) whose complement is locally $1$-connected ($1$-LC), and $M$ is a topological manifold of dimension at least $5$, then $Z$ has a mapping cylinder neighborhood.  The context of Quinn's results  and  of our note as well  is that of relative manifolds $(M,Z)$. This means that $M\setminus Z$ is a manifold (topological in \cite{Qu1}, of class at least $C^2$ for us).  In contrast to \cite{Qu1}, we do not assume that either $M$ or $Z$ are ANRs, nor that $M\setminus Z$ is $1$-LC. Thus, even in dimension $\geq 5$, Theorem \ref{T:MAIN_THM} does not seem to follow directly from Quinn's general results.
   
In dimension $3$, Moise \cite{Mo} proved that every topological $3$-manifold has a piecewise linear (PL) structure unique up to PL-isomorphism. More generally, every topological $3$-manifold has a differentiable structure unique up to diffeomorphism (see \cite[Theorem 2]{Mi2}). Thus, every topological $3$-manifold  can be triangulated.
In an article based on his Ph.D.\ thesis \cite{Ni},   Nicholson  proved that, given a  $3$-manifold $M$  and a closed subset $C$ which is a topological complex (i.e., as a topological space, $C$ is homeomorphic to some locally finite simplicial complex), then $C$ is \emph{tamely embedded} in $M$ (i.e., there exists a triangulation of $M$ such that $C$ is a subcomplex)  if and only if $C$ has a mapping cylinder neighborhood. If $C$ is not tamely embedded,  one says that its embedding is \emph{wild} and $C$ is \emph{wildly embedded}.  

An embedding  of the $2$-sphere $\mathbb{S}^2$ into the $3$-sphere $\mathbb{S}^3$ is \emph{flat} if it is topologically equivalent to the embedding of the equatorial sphere, i.e., there exists an (ambient) homeomorphism $\mathbb{S}^3\ra \mathbb{S}^3$ that turns the obvious diagram with the two embeddings of $\mathbb{S}^2$ commutative.   For example, an embedding of $\mathbb{S}^2$ into $\mathbb{S}^3$ with non simply connected complement clearly cannot be flat. By an old result of Alexander \cite{Al2} (see also \cite{Br}),  an embedding  of $\mathbb{S}^2$ into $\mathbb{S}^3$ is flat if and only if it is  tame.  Therefore, any wild embedding of $\mathbb{S}^2$ into $\mathbb{S}^3$ is not flat. 

One notorious example of a wild embedding is that of the \emph{Alexander horned sphere} \cite{Al}, an embedding of $\mathbb{S}^2$ into $\mathbb{S}^3$ such that at least one of the two connected components of the complement is not simply connected. The existence of such embeddings was a consequential result in topology, first exhibited by Alexander \cite{Al} almost a hundred years ago as a counterexample to the Schoenflies Theorem in dimensions at least $3$. For further examples of wild $2$-spheres, such as the \emph{Antoine, Fox--Artin,} and \emph{Bing  spheres}, 
we refer the reader to \cite{BC}. 
An uncountable family of nonequivalent
Alexander horned sphere embeddings  in the Sobolev class $W^{1,n}$ was produced more recently by Haj{\l}asz and Zhou in \cite{HZ}.

Any compact, topological $2$-manifold is homeomorphic to a finite simplicial complex and hence a wild embedding of such a manifold  cannot have a mapping cylinder neighborhood by Nicholson's theorem.  We thus conclude from our Theorem \ref{T:MAIN_THM}  the following result.


\begin{coralpha}
No wildly embedded topological $2$-manifold in a $3$-manifold can be the zero locus of a Kurdyka--\L ojasiewicz function. 
\end{coralpha}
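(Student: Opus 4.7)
The proof is a short deduction combining Theorem \ref{T:MAIN_THM} with Nicholson's characterization of tame embeddings in dimension three, both of which are already recalled in the introduction. I would argue by contradiction: suppose $\Sigma$ is a topological $2$-manifold wildly embedded in a $3$-manifold $M$ and that, contrary to the conclusion, $\Sigma = f^{-1}(0)$ for some Kurdyka--\L ojasiewicz function $f\colon M\to\R$.

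The first step is to verify that $\Sigma$ is a topological complex in the sense of Nicholson, i.e., that it is homeomorphic to some locally finite simplicial complex. This is a classical triangulability result for topological surfaces going back to Rad\'o, and allows us to invoke Nicholson's theorem with $C=\Sigma$ inside $M$.

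The second step is to feed the hypothesis through Theorem \ref{T:MAIN_THM}: since $\Sigma$ is the zero locus of a K\L{} function on $M$, it admits a mapping cylinder neighborhood in $M$.

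The conclusion is then immediate from Nicholson's equivalence: a topological complex in a $3$-manifold has a mapping cylinder neighborhood if and only if it is tamely embedded. Hence $\Sigma$ must be tamely embedded in $M$, contradicting the standing assumption that its embedding is wild; therefore no such $\Sigma$ can arise as the zero locus of a K\L{} function. I do not anticipate any real obstacle in this argument, since both ingredients are already packaged for use: Theorem \ref{T:MAIN_THM} provides the mapping cylinder neighborhood, Nicholson's theorem provides the dichotomy between tame and wild embeddings, and the only auxiliary fact needed is the classical triangulability of topological $2$-manifolds.
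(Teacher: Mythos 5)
Your proposal is correct and is exactly the paper's argument: the paper derives the corollary "immediately" by combining Theorem \ref{T:MAIN_THM} with Nicholson's equivalence between tameness and the existence of a mapping cylinder neighborhood in a $3$-manifold. Your added remark that a topological $2$-manifold is triangulable (hence a topological complex in Nicholson's sense) is the only implicit step in the paper, and you have handled it correctly.
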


The second (and last) part of our note aims to complement the results of \cite{BDLM} in terms of alternative  characterizations of a  \KL{}  function in a neighborhood of a noninterior  point  of its zero locus $Z$. Given the general context of the results of \cite{BDLM}, some type of semiconvexity condition is necessary there  for a good development of the theory of subdifferential evolution equations. As opposed to \cite{BDLM},   we do not assume  any property resembling  semiconvexity for the functions we consider.  In counterpart, they  are more regular on $M\setminus Z$ and our context is  finite dimensional, with local compactness of $M$ playing a decisive role.

Given a relative manifold $(M,Z)$ and a nonnegative, continuous function $f\colon M\ra [0,\infty)$ with $Z=f^{-1}(0)$ and $f$ of class $C^1$ on $M\setminus Z$, we will say that a point $p\in \partial Z$ is \emph{simple nondegenerate} if it has a neighborhood $U_p$ with $\nabla_x f\neq 0$ for all $x\in U_p\setminus Z$.
If the inequality \eqref{eq02} holds locally around $p$, we say that $p$ is \emph{K{\L} nondegenerate} and, if the desingularization function $\psi$ in \eqref{eq02} can be chosen to be absolutely continuous, rather than $C^1$, we will say that $p\in \partial Z$ is \emph{weakly nondegenerate} (see Definitions~\ref{DEF:KL_FUNCTION} and \ref{DEF:SIM_WKL NONDEG_PT} for more details). We obtain the following characterization.

\begin{thmalpha}\label{T:MAIN_THM2} Let $f\colon M\ra [0,\infty)$ be a continuous function, with zero locus $Z$ and of class $C^1$ on $M\setminus Z$. If $p\in \partial Z$ is a simple nondegenerate point, then the following assertions are equivalent:
	\begin{enumerate}
		\item The point $p\in \partial Z$ is  K{\L} nondegenerate,\label{T:MAIN_THM2_A}
		\item The point $p\in \partial Z$ is weakly K{\L} nondegenerate,\label{T:MAIN_THM2_B}
		\item There exists a compact neighborhood $K_p\ni p$ such that the  upper semicontinuous  function 
		\[
		\alpha^K(t)=\displaystyle{\frac{1}{\displaystyle\inf_{x\in f^{-1}(t)\cap K_p}|\nabla_x f|}}
		\]
		is in $L^1(0, \rho)$ for some $\rho>0$. \label{T:MAIN_THM2_C}
		\item There exists an open  neighborhood $U\ni p$, a positive number $\rho>0$,  and a continuous function $a\colon (0,\rho]\ra (0,\infty)$ such that  $a^{-1}\in L^1(0,\rho)$, and
		\begin{align}\label{eq27} |\nabla_xf|\geq a(f(x)) \text{ for all } x\in U\setminus Z.
		\end{align}
		\label{T:MAIN_THM2_D}
	\end{enumerate}
\end{thmalpha}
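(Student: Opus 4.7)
The plan is to prove the cyclic chain $(1) \Rightarrow (2) \Rightarrow (3) \Rightarrow (4) \Rightarrow (1)$, with the bulk of the work concentrated in $(3) \Rightarrow (4)$. The implication $(1) \Rightarrow (2)$ is immediate: a $C^1$ desingularization $\psi$ is absolutely continuous on $[0,\rho)$, since $\psi'$ is continuous and nonnegative on $(0,\rho)$ while $\psi$ is bounded near $0$, so monotone convergence yields $\psi' \in L^1(0,\rho)$. For $(4) \Rightarrow (1)$, I would set $\psi(t) := \int_0^t ds/a(s)$; the hypotheses on $a$ make $\psi$ continuous with $\psi(0)=0$, strictly increasing, and $C^1$ on $(0,\rho]$ with $\psi'=1/a$, and the chain rule yields $|\nabla(\psi\circ f)(x)| = |\nabla f(x)|/a(f(x)) \geq 1$ on $U \setminus Z$.

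For $(2) \Rightarrow (3)$ I would combine the chain rule for absolutely continuous functions with the coarea formula. If $\psi$ is AC and $|\nabla(\psi\circ f)| \geq C$ wherever defined, then $\psi'(f(x))\,|\nabla f(x)| \geq C$ whenever $\psi'$ exists at $f(x)$. The exceptional set $N \subset (0,\rho)$ where $\psi'$ fails has Lebesgue measure zero, and since $f$ is a $C^1$ submersion near $p$ by simple nondegeneracy, coarea implies that for a.e.\ $t\in (0,\rho)$ the inequality $|\nabla f| \geq C/\psi'(t)$ holds on an $\mathcal{H}^{n-1}$-full subset of the smooth hypersurface $f^{-1}(t) \cap K_p$. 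Continuity of $|\nabla f|$ promotes this to the whole hypersurface, giving $\alpha^K(t) \leq \psi'(t)/C$ a.e.; integrability of $\psi'$ (from AC) then delivers $\alpha^K \in L^1(0,\rho)$.

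For the main step $(3) \Rightarrow (4)$, I first verify that $\alpha^K$ is upper semicontinuous: a subsequence of almost-minimizers on $f^{-1}(t_n) \cap K_p$ converges inside $K_p$ to a point of $f^{-1}(t) \cap K_p$ lying away from $Z$, by compactness of $K_p$ and simple nondegeneracy. The problem then reduces to the purely measure-theoretic statement that an upper semicontinuous $\alpha \geq 0$ in $L^1(0,\rho)$ admits a continuous positive $L^1$-majorant. I plan to build such a majorant by a layer-cake bump construction: the open sets $A_n := \{\,\alpha^K > n\,\}$, $n \geq 1$, satisfy $\sum_n |A_n| \leq \int_0^\rho \alpha^K < \infty$, and for each $n$ one may choose a continuous $\chi_n\colon (0,\rho)\to[0,1]$ equal to $1$ on $A_n$ and supported in an enlargement of measure at most $2|A_n|$. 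Upper semicontinuity makes $b := \sum_n \chi_n$ a locally finite sum, hence continuous, with $b(t) \geq \alpha^K(t) - 1$ pointwise and $\int b \leq 2\int\alpha^K$; then $\tilde b := b + 1$ is the desired majorant, and setting $a := 1/\tilde b$ on a small open neighborhood $U \subset K_p$ of $p$ realizes condition $(4)$.

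The main obstacle is precisely this last construction. Naive ``piecewise-supremum on a dyadic scale'' majorants can fail: an upper semicontinuous $L^1$ function can concentrate tall narrow peaks near $0$ whose supremum-envelopes over geometrically shrinking intervals are not integrable. The layer-cake approach bypasses this by allocating integrable mass $|A_n|$ per level rather than per spatial scale, which is exactly what upper semicontinuity and $L^1$-integrability of $\alpha^K$ jointly afford.
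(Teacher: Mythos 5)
Your cyclic chain and the implications $(1)\Rightarrow(2)$, $(2)\Rightarrow(3)$, $(4)\Rightarrow(1)$ match the paper's proof essentially verbatim; the coarea detour in $(2)\Rightarrow(3)$ is unnecessary, since the definition of weak K{\L} nondegeneracy requires the inequality for \emph{every} $x\in f^{-1}(D_\Psi)\cap U$, so $\Psi'(t)\geq\alpha^K(t)$ holds outright for all $t\in D_\Psi$ and one just integrates. The genuine divergence is in $(3)\Rightarrow(4)$. The paper invokes its Lemma 5.1 (the adaptation of \cite[Lemma 45]{BDLM}): the existence of a continuous integrable majorant of an upper semicontinuous integrable function, whose upper semicontinuous case is in fact only quoted from \cite{BDLM} (the paper writes out only the lower semicontinuous case, via Moreau envelopes glued along a sequence $a_k\searrow 0$). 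You instead construct the majorant directly by a layer-cake argument, allocating integrable mass per level rather than per spatial scale. This is a more elementary, self-contained route that would actually supply the proof the paper omits, and your closing remark correctly identifies why naive sup-envelopes fail.

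However, as written the construction contains a real error: for an \emph{upper} semicontinuous function the sets $\{\alpha^K>n\}$ are not open. Upper semicontinuity makes the sublevel sets $\{\alpha^K<c\}$ open and the superlevel sets $\{\alpha^K\geq c\}$ closed; $\{\alpha^K>n\}$ is merely $F_\sigma$. You should work with the closed sets $C_n:=\{\alpha^K\geq n\}$ (noting $\sum_n|C_n|\leq\rho+\int_0^\rho\alpha^K<\infty$ by layer cake, and that finiteness of each $C_n$ uses the finite-valuedness of $\alpha^K$, which must be recorded, as the paper does), choose open $V_n\supset C_n$ with $|V_n|\leq|C_n|+2^{-n}$ by outer regularity, and take $\chi_n$ to be a Urysohn function for $C_n\subset V_n$. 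A second gloss: local finiteness of $\sum_n\chi_n$ does \emph{not} follow from upper semicontinuity alone. Upper semicontinuity gives a neighborhood $W$ of $t_0$ with $\alpha^K<N$ on $W$, hence $W\cap C_n=\emptyset$ for $n\geq N$, but the enlargements $V_n$ may still invade $W$. You must force them to shrink, e.g.\ take $V_n=\{t:d(t,C_n)<\delta_n\}$ with $\delta_n\leq 2^{-n}$ also small enough that $|V_n|\leq|C_n|+2^{-n}$, and $\chi_n(t)=\max\bigl(0,1-d(t,C_n)/\delta_n\bigr)$; then $\chi_n$ vanishes near $t_0$ once $\delta_n$ is smaller than $d(t_0,C_n)$. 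With these repairs $\tilde b:=1+\sum_n\chi_n$ is continuous, dominates $\alpha^K$, and is integrable, and the rest of your argument (including the upper semicontinuity of $\alpha^K$ via almost-minimizers, which the paper dismisses as straightforward) is sound.
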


 The equivalence of K{\L} nondegeneracy with the integrability condition \eqref{T:MAIN_THM2_C} in Theorem~\ref{T:MAIN_THM2} above appears also in \cite[Theorems 18 and 20]{BDLM}.   In a sense, our proof of Theorem \ref{T:MAIN_THM2}, which does not use the (sub)gradient curves, is a shortcut adapted to the context of relative, finite dimensional manifolds. One important lemma from \cite{BDLM} about integrable majorants  of upper semicontinuous functions appears prominently in our proof as well.

We conclude this note with a brief study of 
what one can describe as the  ``opposite'' gradient inequality to \eqref{eq27}, namely
 \begin{align}
 \label{eq03} |\nabla f|\leq b(f),
 \end{align}
 where $b$  is a nonnegative continuous function with $\int_0^\rho b^{-1}=\infty$ for some $\rho>0$. We note that such an equation holds in a neighborhood of a simple non-degenerate point $p\in \partial Z$ if and only if
 \[ 
 \int_0^{\rho}\frac{1}{\displaystyle\sup_{x\in f^{-1}(t)\cap K_p} |\nabla_x f|}~dt=\infty
 \]
 for some $\rho> 0$ and a compact neighborhood $K_p$ of $p$.
 
   A quick look at the one-dimensional case shows that there do not exist non-negative $C^1$ functions that vanish at a point on the real line and satisfy (\ref{eq03}).  We prove that  no point in the boundary of $Z$ can be reached from $M\setminus Z$ by a rectifiable curve.   An alternative characterization of a gradient inequality of type (\ref{eq03}) is also presented making use of a ``dual'' version  of Lemma 45 from \cite{BDLM}. This leads to a classification of simple non-degenerate critical points of non-negative functions in three distinct classes for which we provide examples.

Some major advances in the topology of smooth manifolds have been pushed by the idea that critical points of smooth functions say something about the change in topology. The following question, whose answer we believe to be negative, is a timid attempt at an ``inverse problem'': \emph{The topology of the zero locus might say something about the immediate presence of critical points}.  
	
	
	\begin{question*}  Can a wild 2-sphere be the zero locus of a nonnegative $C^2$ function $f\colon S^3\ra [0,\infty)$ with only simple nondegenerate points?	\end{question*}

Our article is organized as follows. In Section~\ref{S:PRELIM} we collect basic definitions and examples, and set up the framework for the proof of Theorem~\ref{T:MAIN_THM}. We prove this result in Section~\ref{S:EXISTENCE_MCN}.
 Section~\ref{Sec5} contains the proof of Theorem~\ref{T:MAIN_THM2}, along with further considerations on nondegenerate points of \KL\ functions.


\begin{ack}
The first named author would like to thank Vincent Grandjean, Aris Daniilidis, Luciano Mari and Edson Sampaio for some useful discussions. The authors would also like to thank the referee for helpful suggestions.
\end{ack}


\section{Preliminaries}
\label{S:PRELIM}

Let $(M,d)$ be a connected, locally compact metric space and let $Z\subset M$ be a nonempty, closed, proper subspace such that $M\setminus Z$ is a $C^k$ ($k\geq 1$) manifold of dimension $n\geq 1$. Following Spanier \cite[Ch.~6, Sec.~ 2]{Sp}, we call the pair $(M,Z)$ a \emph{relative $C^k$ manifold}.

We emphasize that $(M,d)$ need not be complete. We will require, however, that the restriction of the metric $d$ to $M\setminus Z$ comes from a  Riemannian metric on this manifold. 

The set $Z$ might have nonempty interior.  Since $M$ is connected and $Z\subset M$ is proper, closed, and nonempty, $\partial Z\neq \emptyset$. This ensures that Definition \ref{DEF:KL_FUNCTION} is not vacuous.

 We will call a function  $\alpha\colon (A_1,B_1)\ra (A_2,B_2)$ between pairs of sets, i.e., $B_i\subset A_i$,    a \emph{relative} function if both $\alpha(B_1)\subset B_2$ and $\alpha(A_1\setminus B_1)\subset A_2\setminus B_2$ hold. This implies that $\alpha^{-1}(B_2)=B_1$ and $\alpha^{-1}(A_2\setminus B_2)=A_1\setminus B_1$.   
 
 The $C^l$ morphisms in the category of relative $C^k$-manifolds with $l\leq k$ considered in this article will be  continuous  relative functions $f\colon (M_1,Z_1)\ra (M_2,Z_2)$ such that $f\bigr|_{M_1\setminus Z_1}$ is of class $C^l$ .



	
\begin{definition}
\label{DEF:KL_FUNCTION}	
Let $(M,Z)$ be a relative $C^1$ manifold and let $f\colon (M,Z)\ra ([0,\infty),\{0\})$ be a relative $C^1$ function. 
A point  $p\in \partial Z$  is  \emph{\KL-nondegenerate} (with respect to $f$)  if there exists a triple $(\rho,U,\Psi)$ with $\rho>0$ a positive  constant, $U\subset M$ an open neighborhood of $p$, and $\Psi\colon ([0,\rho),\{0\})\ra ([0,\infty),\{0\})$  a $C^1$ relative function satisfying the following conditions:\vspace{5pt}
\begin{itemize}[leftmargin=1.7cm]
\item[(\KL1)] $\Psi'(t)>0$, for $t\in (0,\rho)$;\vspace{5pt}
\item[(\KL2)] $|\nabla_x(\Psi\circ f)|\geq 1$ for all $x\in f^{-1}((0,\rho))\cap U$. \label{DEF:KL2}
\end{itemize}
\vspace{5pt}

The relative function  $f$ is a  \emph{Kurdyka--\L ojasiewicz function} (or, for short, a \emph{{\KL} function})  if every point $p\in \partial Z$ is \KL-nondegenerate.

The function $f$ is a \emph{global} (or \emph{uniform}) \textit{{\KL} function }if the triple $(\rho,U,\Psi)$ can be chosen so that $U$ is a neighborhood of $\partial Z$. 

A relative $C^1$ function $f\colon(M,Z)\ra (\bR,\{0\})$, not necessarily nonnegative, is  a K{\L} function if its absolute value $|f|$ is K{\L}.
\end{definition}


\begin{example}
\label{examples}
\begin{enumerate}

\item
	
	Let $M_o\subset \bR^n$ be a connected,  embedded $C^k$-submanifold of dimension $m<n$ with nonempty (topological) boundary $\partial M_o$. Clearly, $M_o$ and $\partial M_o$ are disjoint. Let $Z\subset \partial M_o$ be an open subset with respect to the 
	subspace topology on $\partial M_o$. Let $M=M_o\cup Z$ and observe that  $Z$ is  closed in $M$. Moreover,  $Z=U\cap \partial M_o$, where $U$ is open in $\bR^n$.  The set $U'=U\cap (M_o\cup \partial M_o)$ is  locally compact, as it is the intersection of an open and a closed subset. Since $U'=U\cap M$ is open in $M$, and $Z\subset U'$, it follows that every point  $p\in Z$ has a neighborhood in $M$ which is locally compact. This property is obviously  shared also by the points  $p\in M_o$.  It follows that $M$ is locally compact and $(M,Z)$ is a relative manifold.
	
	Observe that, if we let $Z\subset \partial M_o$ be closed in $\partial M_o$ rather than open, then $M_o\cup Z$ might not be locally compact. For example, when  $M_o$ is the open upper half plane in $\R^2$ and $Z$ is a compact segment on the $x$-axis.

\item Let $U$ be an open and bounded subset of $\R^n$. By the Kurdyka--\L{ojasiewicz} Theorem (\cite[Theorem 1]{K}), if $f\colon U\ra \bR$ is a positive differentiable tame function  (with respect to some o-minimal structure on $(\R,+,\cdot)$ containing $U$), then there exists $\Psi$ as in Definition~\ref{DEF:KL_FUNCTION} (cf.\ \cite[Introduction]{DHL}). In particular, real analytic and subanalytic functions are K{\L}.

\item If $\psi\colon ([0,\infty),\{0\})\ra ([0,\infty),\{0\})$ is a relative $C^1$ function with $\psi'>0$, then $\psi\circ f$ is a K{\L} function whenever $f$ is a K{\L} function.

\item Let $M$ be a Riemannian manifold (possibly with boundary) and let $Z\subset M$ be a smooth, properly embedded submanifold  of $M$. Fix $p>0$ and, given $x\in M$, let $f(x)=\tilde{d}(x,Z)^p$, where $d(\cdot,Z)$ is the distance function to $Z$ and $\tilde{d}(\cdot, Z)$ is a function that coincides with $d$ on an open neighborhood $U$ of $Z$ and is smooth and nonvanishing away from $Z$.  One  takes $\Psi(t)=t^{1/p}$. It is well-known that $|\nabla d(\cdot, Z)|\equiv 1$  on some $U\setminus Z$. Clearly, if $p=1$, $f$ is just continuous at $Z$.

\item Nonnegative transnormal functions (see Wang \cite{Wa}) are K\L. These are functions $f\colon M\ra \bR$ on a smooth manifold that satisfy $|df|^2=b(f)$ for some smooth function $b$ (see  Remark~\ref{remtrans}).

\item Let $f\colon (M,Z)\ra ([0,\infty),\{0\})$ be a relative $C^1$ function. Observe that a necessary condition for $f$ to be {\KL}  is that there should exist a neighborhood $U$ of $Z$ such that $\nabla f$ does not vanish on $U\setminus Z$.  An example of a function $f$ on $\bR^2$ with  a gradient orbit spiraling around  the unit circle $S^1\subset \R^2$ which is  a connected component of the zero locus $Z$ of $f$ may be found in \cite[p.~14]{PdM}.  The gradient of $f$ does not vanish  on some $U\setminus S^1$ and $f$ cannot be K{\L}, as the gradient orbit will have infinite length. The zero locus $Z$ is disconnected for this $f$, with the two other connected components of $Z$ spiraling around $S^1$. In \cite{BDLM}, the authors gave an example of a nonnegative $C^2$ convex function on $\bR^2$ whose minimum locus is $S^1$ and which is not K{\L}. More recently, Bolte and Pauwels \cite{BoltePauwels} constructed convex  non-\KL{} functions with regularity $C^k$ $(k\geq 1)$.

\item Let $f\colon M\ra \bR$ be a Morse function on a smooth  $n$-dimensional Riemannian manifold $M$ and suppose that $0$ is a critical value. Then $f^+=\max\{f,0\}$ is a K{\L} function with zero locus $Z=f^{-1}((-\infty,0])$. Note that $f^+$ is obviously K{\L} at a point $p\in \partial Z=f^{-1}(0)$ which is noncritical for $f$.  If $p\in \partial Z$ is a critical point of $f$, one may use the Morse lemma \cite{Mi1}  to produce a coordinate chart $\varphi\colon V\subset M \to U\subset \bR^n$ with $p\in V$ and such that $\varphi(p) = 0\in U$. Letting $x=(x_1,\ldots,x_n)\in \R^n$, the local expression for $f$ on $U$ is given by
\[ 
f(x)=\frac{1}{2} (x_1^2+\ldots +x_k^2-x_{k+1}^2-\ldots x_n^2).
\]
In these coordinates, the metric  on $M$ is given by a smooth function $g\colon U\ra \Sym^+(\bR^n)$  into the positive symmetric matrices $\Sym^+(\bR^n)$ with $g(0)=I_n$, the $n\times n$ identity  matrix.
The gradient is then
\begin{align*}
	\nabla_{x}f & = g^{-1}(x_1,\ldots,x_k,-x_{k+1},\ldots,-x_n)\\
				& =g^{-1}(Jx),
\end{align*}
where $J=\left(\begin{array}{cc} I_{k}& 0\\
 0& -I_{n-k}\end{array}\right)$. 
Hence, if we let $G\colon U\ra \Sym^+(\R^n)$ be given by
\[
G(x)=(g^{-1}J)^T(g^{-1}J),
\]
 then
\begin{equation}\label{nbG} |\nabla_{x} f|^2=\|x\|_{G(x)}^2,
\end{equation}
where $\|\cdot\|_{G(x)}$ is the norm determined by $G(x)$. 
Since $G$ is continuous and $G(0)=I_n$, there exists $W\subset U$ and $0<C<1$ such that $\lambda_{G(x)}\geq C$ for all $\lambda\in \sigma(G(x))$, for all $x\in W$. It follows then that
\begin{equation}\label{nbG1}\|x\|_{G(x)}^2\geq C\|x\|_{\eucl}^2 \text{ for all } ~x\in W.
\end{equation}

We obviously have $\|x\|_{\eucl}^2\geq 2|f(x)|$, which together with (\ref{nbG}), (\ref{nbG1}) implies that
\[
|\nabla_{x}f^+|^2\geq 2C f^+(x)
\]
for all $x\in W\setminus (f^+)^{-1}(0)$. So $\Psi(t)=C't^{1/2}$ for some $C'>0$. A similar reasoning works if $f$ is a Morse--Bott function. \end{enumerate}
\end{example}

\subsection{Mapping cylinder neighborhoods.} \label{MCNsbs}  Let $f\colon X\to Y$ be a continuous function between topological spaces. Recall that the \emph{mapping cylinder} $M_f$ of $f$ is the quotient space of the disjoint union $(X\times [0,1])\sqcup Y$ obtained after identifying each $(x,0)\in X\times [0,1]$ with $f(x)\in Y$. By identifying each $x\in X$ with $(x,1)\in M_f$ we may consider $X$ and $Y$ as closed subsets of $M_f$. We will denote the interior of an arbitrary set $A$ in a given topological space by $\inte(A)$. 
Let us now recall the following definition (cf.\ \cite{KwR,Ni,Qu}). 


\begin{definition}
\label{DEF:MCN}	
Let $X$ be a topological space and let $C$ be a closed subset of $X$. A closed neighborhood $N\supset C$ is a \emph{mapping cylinder neighborhood} of $C$ if there exists a surjective function $f\colon \partial N\ra \partial C$ and a homeomorphism $\varphi\colon N\setminus \inte{C}\ra M_{f}$ such that  
	\[ 
	\varphi(p)=
	\left\{\begin{array}{cc} 
	[(1,p)], & \mbox{if}\; p\in \partial N; \\[7pt]
	\;\quad\ \, [p], 	& \mbox{if} \; p\in \partial C.
	\end{array}\right. 
	\]
Let $(M,Z)$ be a relative $C^k$ manifold ($k\geq 2$). A mapping cylinder neighborhood $N\supset Z$ is \emph{$C^l$ regular}, with $l\leq k$, if $\partial N$ is a $C^l$ hypersurface in $M\setminus Z$. 
\end{definition}


\begin{remark} The homeomorphism $\varphi\colon N\setminus \inte(C)\to M_f$ in  Definition~\ref{DEF:MCN} extends to a homeomorphism $\widetilde{\varphi}\colon N\ra M_{\tilde{f}}$, where $\tilde{f}$ is just $f$ with codomain $C$, by letting $\widetilde{\varphi}$ be the identity on $\inte(C)$. We will therefore not distinguish between a mapping cylinder neighborhood of $C$ and a mapping cylinder neighborhood of $\partial C$.
\end{remark}


\section{Gradient flows of {\KL} functions}

We start this section by providing an alternative approach (in Proposition \ref{EX:BOUNDED_LENGTHS}) to proving the uniform boundedness of the lengths of gradient trajectories of a {\KL} function (see \cite[Theorem 2]{K} or \cite[Theorem 18]{BDLM}). 
Recall first the standard theorem on dependence on initial conditions for autonomous (i.e., time independent) ordinary differential equations (cf.\ \cite[Ch.\ 7.3]{HSD}).


\begin{theorem}
\label{T:ODE1} 
Let $X\colon \Omega\ra \bR^n$ be a $C^1$ vector field on an open set $\Omega\subset \bR^n$ and let $x_0\in\Omega$. Let $\varphi_{x_0}$ be the unique solution of the initial value problem
\begin{equation}\label{EQ:IVP_EXISTENCE}\varphi_{x_0}'=X(\varphi_{x_0}),\qquad \varphi_{x_0}(0)=x_0,
\end{equation}
  defined on its maximal  interval $(\omega^-_{x_0},\omega^+_{x_0})$. Then the set
\[ D =\{(t,x_0)\in \bR\times \Omega \mid x_0\in\Omega,\; t\in (\omega^-_{x_0},\omega^+_{x_0}) \}
\]
is open and $\varphi\colon D\ra \Omega$, defined by $\varphi(t,x_0)=\varphi_{x_0}(t)$, is a $C^1$ function.
\end{theorem}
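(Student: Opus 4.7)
The plan is to establish this classical result in four standard stages via Picard--Lindelöf, Gronwall, and the variational equation. First, I would invoke local existence and uniqueness: because $X$ is $C^1$, it is locally Lipschitz, so applying the Banach fixed point theorem to the Volterra operator $T(\gamma)(t) = x_0 + \int_0^t X(\gamma(s))\, ds$ on a suitably small ball in $C([-\epsilon, \epsilon], \Omega)$ yields a unique local solution. Patching such solutions together produces the maximal interval $(\omega^-_{x_0}, \omega^+_{x_0})$.

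Second, I would record the quantitative continuous dependence estimate obtained from Gronwall's inequality: if two solutions $\varphi_{x_0}, \varphi_{x_1}$ both stay inside a compact set $K \subset \Omega$ on which $X$ is $L$-Lipschitz, then subtracting their integral forms and applying Gronwall gives
\[
|\varphi_{x_0}(t) - \varphi_{x_1}(t)| \leq |x_0 - x_1|\, e^{L|t|}.
\]

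Third, to show that $D$ is open, fix $(t_0, x_0) \in D$ and assume $t_0 \geq 0$ (the case $t_0 < 0$ is symmetric). The compact trajectory $\varphi_{x_0}([0, t_0])$ has positive distance from $\partial \Omega$, so a thin tubular neighborhood $K$ of it is still compactly contained in $\Omega$. Combining the Gronwall bound with an escape-time estimate (a solution can leave $K$ only after a time bounded below by $\dist(K, \partial \Omega)/\sup_K |X|$), I can find $\delta, \epsilon > 0$ such that every initial condition in $B_\delta(x_0)$ produces a solution defined on $[0, t_0 + \epsilon]$ that remains inside $K$. This furnishes an open neighborhood of $(t_0, x_0)$ inside $D$ and also gives continuity of $\varphi$ on $D$.

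The main obstacle is the $C^1$ regularity of $\varphi$. The central tool is the variational equation obtained by formally differentiating $\partial_t \varphi = X \circ \varphi$ in $x_0$: the candidate Jacobian $A(t) := D_{x_0} \varphi(t, x_0)$ should solve the linear IVP
\[
A'(t) = DX(\varphi(t, x_0))\, A(t), \qquad A(0) = I_n,
\]
which has a unique global solution on $(\omega^-_{x_0}, \omega^+_{x_0})$ since the coefficient is continuous and the equation linear. To verify that this $A(t)$ is the genuine Fr\'echet derivative, I would set
\[
R(t, h) := \varphi(t, x_0 + h) - \varphi(t, x_0) - A(t) h
\]
and show by direct computation, using that $X \in C^1$ supplies a modulus of continuity for $DX$, that $R$ satisfies an inhomogeneous linear ODE whose forcing term is $o(|h|)$ uniformly on compact subintervals; one last Gronwall application then delivers $|R(t, h)| = o(|h|)$ as $h \to 0$. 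Continuity in $(t, x_0)$ of $\partial_t \varphi = X \circ \varphi$ is automatic from continuity of $\varphi$, while continuity of $\partial_{x_0}\varphi = A$ follows from continuous dependence applied to the linear variational equation. Together these yield $\varphi \in C^1(D)$. The bookkeeping in this final Gronwall estimate is the only genuine difficulty; everything else is packaging of Picard--Lindelöf plus the two Gronwall estimates.
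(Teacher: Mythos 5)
Your outline is correct and is precisely the standard Picard--Lindel\"of/Gronwall/variational-equation proof; the paper does not prove this statement at all but simply cites it as a classical result (Hirsch--Smale--Devaney, Ch.~7.3), so there is nothing to compare beyond noting that your argument is the textbook one being invoked. The only phrasing to tighten is in the openness step: the relevant point is that the Gronwall bound keeps perturbed solutions within the margin by which $\varphi_{x_0}([0,t_0+\epsilon])$ sits inside the compact set $K$, so they cannot exit $K$ before time $t_0+\epsilon$; the quantity $\dist(K,\partial\Omega)/\sup_K|X|$ as written bounds the time to reach $\partial\Omega$ from $K$, which is a different (though also useful) estimate.
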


It is natural to ask if anything can be inferred about the continuity of the functions 
\begin{align*} \omega^-\colon\Omega\ra [-\infty,0),&  \quad x_0\mapsto \omega^-_{x_0};\\[5pt]
    \omega^+\colon \Omega\ra (0,\infty],& \quad x_0\mapsto \omega^+_{x_0}.
    \end{align*} 
     
Owing to the classical alternative characterization of upper and lower semicontinuity involving the epigraph of a function, the following result follows immediately from Theorem~\ref{T:ODE1}.


\begin{cor}
\label{COR:OMEGA_SEMICONTINUITY}
The function $\omega^-$ is upper semicontinuous and the function $\omega^+$ is lower semicontinuous.
\end{cor}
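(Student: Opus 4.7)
The plan is to unpack the epigraph characterization of semicontinuity and reduce to the openness of $D$ furnished by Theorem~\ref{T:ODE1}. Recall that a function $g\colon\Omega\to[-\infty,+\infty]$ is lower semicontinuous if and only if $\{x_0\in\Omega : g(x_0)>c\}$ is open for every $c\in\mathbb{R}$, and upper semicontinuous if and only if $\{x_0\in\Omega : g(x_0)<c\}$ is open for every $c\in\mathbb{R}$. So it suffices to verify openness of these sublevel/superlevel sets for $\omega^+$ and $\omega^-$, respectively.

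For the lower semicontinuity of $\omega^+$, I would fix $c\in\mathbb{R}$. If $c\leq 0$, the set $\{x_0 : \omega^+(x_0)>c\}$ equals $\Omega$, since $\omega^+>0$ everywhere. If $c>0$, pick $x_0$ with $\omega^+(x_0)>c$ and choose any $t_0$ with $c<t_0<\omega^+(x_0)$. Then $\omega^-(x_0)<0<t_0<\omega^+(x_0)$, so $(t_0,x_0)\in D$. Since $D$ is open by Theorem~\ref{T:ODE1}, there is a product neighborhood $(t_0-\epsilon,t_0+\epsilon)\times U$ of $(t_0,x_0)$ contained in $D$; in particular, for every $x'\in U$ one has $(t_0,x')\in D$, hence $\omega^+(x')>t_0>c$. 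This shows $\{x_0:\omega^+(x_0)>c\}$ is open.

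The argument for the upper semicontinuity of $\omega^-$ is entirely symmetric. For $c\geq 0$ the set $\{x_0:\omega^-(x_0)<c\}$ equals $\Omega$. For $c<0$, given $x_0$ with $\omega^-(x_0)<c$, pick $t_0\in(\omega^-(x_0),c)$; then $\omega^-(x_0)<t_0<0<\omega^+(x_0)$, so $(t_0,x_0)\in D$, and openness of $D$ supplies a neighborhood $U$ of $x_0$ on which $\omega^-(x')<t_0<c$. Hence $\{x_0:\omega^-(x_0)<c\}$ is open.

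There is no substantive obstacle: the entire content is the openness of $D$ from Theorem~\ref{T:ODE1}, combined with the harmless bookkeeping of separating the cases where the defining inequality is automatically satisfied (because $\omega^-<0<\omega^+$) from the cases where the openness of $D$ must be invoked.
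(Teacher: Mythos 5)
Your argument is correct and is exactly the route the paper intends: the corollary is stated as following immediately from the openness of $D$ in Theorem~\ref{T:ODE1} together with the level-set characterization of semicontinuity, and your write-up simply makes that deduction explicit. No gaps.
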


One cannot expect better regularity than Corollary~\ref{COR:OMEGA_SEMICONTINUITY}, as  the  example of an open set in $\bR^2$ contained between the graphs of a negative upper semicontinuous function and  a positive lower semicontinuous function with the flow induced by $\partial_y$.

However, the following result provides a link between the left and right end maps $\omega^{\mp}$ for one vector field and its product with a positive function.
 


\begin{lem}
\label{Lem01} 
	Let $\Omega$ be an open subset of $\R^n$, let $X\colon \Omega\ra \bR^n$ be a $C^1$ vector field, and fix $x_0\in\Omega$. Let $h\colon\Omega\ra (0, \infty)$ be a positive, $C^1$ function, set $Y=hX$, and consider the initial value problems
	\begin{equation}
	\label{EQ:IVP_X}
	\varphi_{x_0}'=X(\varphi_{x_0}),\qquad \varphi_{x_0}(0)=x_0
	\end{equation}
	and
	\begin{equation}
	\label{EQ:IVP_Y}
	\psi_{x_0}'=Y(\psi_{x_0}),\qquad \psi_{x_0}(0)=x_0.
	\end{equation}
	Let $D_1\subset \bR\times \Omega$ be the domain of definition of the flow $\varphi$ of $X$ and let $D_2\subset \R\times \Omega$ be the domain of definition of the flow $\psi$ of $Y$.  
	Then there exists a $C^1$ diffeomorphism $\theta\colon D_1\ra D_2$ making the following diagrams commutative:
\[\xymatrix{  D_1\ar[rr]^{\theta} \ar[dr]_{\pi_2} & & D_2\ar[dl]^{\pi_2},\\
  & \Omega &} \qquad\qquad \xymatrix{  D_1\ar[rr]^{\theta} \ar[dr]_{\varphi} & & D_2\ar[dl]^{\psi},\\
  & \Omega &}\]
  where $\pi_2\colon \bR\times \Omega\ra \Omega$ is projection onto the second factor. More precisely, for each $(t,x_0)\in D_1$,
  \[
  \theta(t,x_0)=(\theta_{x_0}(t),x_0)
  \]
  with
  \begin{eqnarray}\label{Eqtheta} \theta_{x_0}(t)=\int_0^t\frac{1}{h(\varphi_{x_0}(r))}~dr.
  \end{eqnarray}
\end{lem}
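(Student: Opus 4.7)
The intuition is standard: since $Y=hX$ with $h>0$, the trajectories of $Y$ and $X$ are identical as unparametrised curves; only the speed changes. The map $\theta_{x_0}$ is the time reparametrisation that converts $X$-time into $Y$-time.

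The plan is to first define $\theta_{x_0}(t)$ by the integral formula \eqref{Eqtheta} for $t\in (\omega^-_{x_0}(X),\omega^+_{x_0}(X))$, where the subscript $X$ (respectively $Y$) indicates that the maximal interval refers to the flow of $X$ (respectively $Y$). Since $h\circ\varphi_{x_0}>0$ and continuous, $\theta_{x_0}$ is well-defined, of class $C^1$ in $t$ with $\theta_{x_0}'(t)=1/h(\varphi_{x_0}(t))>0$, and strictly increasing; in particular it has a $C^1$ inverse $\theta_{x_0}^{-1}$ on its image $J_{x_0}\subset\R$. The joint $C^1$ dependence of $\theta(t,x_0)=(\theta_{x_0}(t),x_0)$ on $D_1$ is immediate from Theorem~\ref{T:ODE1} (the flow $\varphi$ is $C^1$ on $D_1$) together with differentiation under the integral sign.

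Next I would verify that $\theta$ carries $X$-trajectories to $Y$-trajectories. Fix $x_0\in\Omega$ and set $\widetilde{\psi}(s):=\varphi_{x_0}(\theta_{x_0}^{-1}(s))$ for $s\in J_{x_0}$. Then
\[
\widetilde{\psi}'(s)=\varphi_{x_0}'(\theta_{x_0}^{-1}(s))\cdot\bigl(\theta_{x_0}^{-1}\bigr)'(s)=X(\widetilde{\psi}(s))\cdot h(\widetilde{\psi}(s))=Y(\widetilde{\psi}(s)),\qquad \widetilde{\psi}(0)=x_0.
\]
By uniqueness of solutions of \eqref{EQ:IVP_Y}, $\widetilde{\psi}=\psi_{x_0}$ on $J_{x_0}$, hence $J_{x_0}\subset (\omega^-_{x_0}(Y),\omega^+_{x_0}(Y))$ and $\psi_{x_0}\circ\theta_{x_0}=\varphi_{x_0}$; in particular the second diagram commutes wherever $\theta$ is defined, and the first diagram is immediate from the formula $\theta(t,x_0)=(\theta_{x_0}(t),x_0)$.

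The main (and only delicate) point is to show that $\theta_{x_0}$ is in fact a bijection onto $(\omega^-_{x_0}(Y),\omega^+_{x_0}(Y))$, so that $\theta$ is a bijection $D_1\to D_2$. I would obtain this by symmetry: apply the construction just carried out to the pair $(Y,\tilde h X)$ with $\tilde h:=1/h$, noting that $\tilde h Y=X$. This produces a $C^1$ map $\tilde\theta\colon D_2\to D_3$, where $D_3\subset\R\times\Omega$ is the domain of the flow of $\tilde h Y=X$, so $D_3=D_1$; the integrand is now $1/\tilde h(\psi_{x_0}(r))=h(\psi_{x_0}(r))$. A direct computation (change of variables $r=\theta_{x_0}(u)$ inside the integral defining $\tilde\theta_{x_0}\circ\theta_{x_0}$, using $\psi_{x_0}\circ\theta_{x_0}=\varphi_{x_0}$ and $\theta_{x_0}'(u)=1/h(\varphi_{x_0}(u))$) yields $\tilde\theta_{x_0}\circ\theta_{x_0}=\mathrm{id}$ on $(\omega^-_{x_0}(X),\omega^+_{x_0}(X))$ and, symmetrically, $\theta_{x_0}\circ\tilde\theta_{x_0}=\mathrm{id}$ on $(\omega^-_{x_0}(Y),\omega^+_{x_0}(Y))$. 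Thus $\theta_{x_0}$ is a diffeomorphism between the two maximal intervals, $\theta\colon D_1\to D_2$ is a $C^1$ bijection with $C^1$ inverse $\tilde\theta$, and both diagrams commute. The symmetry argument is the only step where one must be careful that the integral $\tilde\theta_{x_0}$ is taken along the correct curve ($\psi_{x_0}$, not $\varphi_{x_0}$), and the key identity making it work is exactly $\psi_{x_0}\circ\theta_{x_0}=\varphi_{x_0}$ established above.
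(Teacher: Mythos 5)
Your proof is correct and follows essentially the same route as the paper: both define $\theta_{x_0}$ by the integral formula and identify $\psi_{x_0}\circ\theta_{x_0}=\varphi_{x_0}$ as a time reparametrisation, the paper phrasing the inverse $\gamma_{x_0}=\theta_{x_0}^{-1}$ as the solution of the one-dimensional ODE $\gamma'=(h\circ\varphi_{x_0})(\gamma)$. Your explicit symmetry argument with $\tilde h=1/h$ is a more careful justification of the point the paper dispatches by appealing to maximality of the two intervals.
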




\begin{proof} Let $\varphi_{x_0}\colon (\omega_{x_0}^{-},\omega_{x_0}^{+})\subset \R\to \Omega$ be the unique solution to the initial value problem \eqref{EQ:IVP_X}.  Then the unique solution $\psi_{x_0}\colon (\nu^{-}_{x_0},\nu_{x_0}^{+})\subset\R\to \Omega$  to the initial value problem \eqref{EQ:IVP_Y}  is given by  $\psi_{x_0}=\varphi_{x_0}\circ \gamma_{x_0}$, where $\gamma_{x_0}\colon(\nu^{-}_{x_0},\nu_{x_0}^{+})\to (\omega_{x_0}^{-},\omega_{x_0}^{+})$ is the unique solution to the initial value problem on the real line given by
\[ 
\gamma_{x_0}'(t)=(h\circ \varphi_{x_0})(\gamma_{x_0}(s)),\qquad \gamma_{x_0}(0)=0.
\]
Hence $\gamma_{x_0}=\theta^{-1}_{x_0}$, where \[
\theta_{x_0}(t)=\int_0^t\frac{1}{h(\varphi_{x_0}(r))}~dr.
\]
Note that $\theta_{x_{0}}\colon (\omega_{x_0}^{-},\omega_{x_0}^{+}) \to\R$ is a $C^1$ diffeomorphism onto its image due to the positivity of $h$. Thus,  we get thus a bijective correspondence between the two initial value problems \eqref{EQ:IVP_X} and \eqref{EQ:IVP_Y}. 
Since the intervals  $(\omega_{x_0}^{-},\omega_{x_0}^{+})$ and $(\nu_{x_0}^{-},\nu_{x_0}^{+})$ on which the solutions $\varphi_{x_0}$ and $\psi_{x_0}$ are defined are maximal, 
it follows that $\theta_{x_0}$ is in fact a $C^1$ diffeomorphism between
$(\omega_{x_0}^{-},\omega_{x_0}^{+})=D_1\cap (\bR\times \{x_0\})$ and $(\nu_{x_0}^{-},\nu_{x_0}^{+})=D_2\cap (\bR\times \{x_0\}$).
The explicit expression for $\theta$ makes it clear that it is $C^1$ also in the $x$ directions. It follows then that $\theta\colon D_1\to D_2$ is a diffeomorphism with the desired properties. 
\end{proof}

Let us illustrate how Lemma~\ref{Lem01} can be of good use.

\begin{prop}
	\label{EX:BOUNDED_LENGTHS}
Let $(M,Z)$ be a relative $C^k$ manifold ($k\geq 2$) and let $f\colon (M,Z)\ra ([0,\infty),\{0\})$ be a relative $C^2$ function such that $\nabla f\neq 0$ on $M\setminus Z$. Assume  that
\begin{eqnarray}\label{EqPsi}\qquad |\nabla_x(\Psi\circ f)|\geq 1,\qquad \forall\;x\in f^{-1}(D_{\Psi})\cap M\setminus Z\end{eqnarray} for some nonnegative, nondecreasing, absolutely continuous  function $\Psi$, where $D_{\Psi}$ is the set of differentiability points for $\Psi$. 
Then the length of the integral curve $\alpha_x$ of $-\nabla f$ with initial condition $x\in M\setminus Z$ satisfies
\begin{align}
\label{Eqnu0} \length(\alpha_x)\leq \Psi(f(x)) \text{ for all } x\in M\setminus Z. 
\end{align}
\end{prop}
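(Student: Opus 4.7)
My plan is to reparametrize the trajectory $\alpha_x$ by the value of $f$ using Lemma \ref{Lem01}, which converts the length bound into a one-dimensional integral in the $f$-variable where the absolute continuity of $\Psi$ can be exploited directly.

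Since $\nabla f$ is nowhere zero on the $C^2$ open manifold $M\setminus Z$, the function $h:=1/|\nabla f|^2$ is positive and of class $C^1$. I would apply Lemma \ref{Lem01} to the $C^1$ vector fields $X=-\nabla f$ and $Y=hX=-\nabla f/|\nabla f|^2$ to obtain a $C^1$ diffeomorphism between the maximal flow intervals that matches the trajectory $\alpha_x$ of $X$ to the trajectory $\psi_x$ of $Y$ through $x$. A direct computation gives $\frac{d}{ds}(f\circ\psi_x)(s)=\langle\nabla f,-\nabla f/|\nabla f|^2\rangle=-1$, so $f(\psi_x(s))=f(x)-s$. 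Reparametrization invariance of length then yields, for any $T$ in the forward maximal interval of $\alpha_x$ and $S:=f(x)-f(\alpha_x(T))\geq 0$,
\[
\length(\alpha_x|_{[0,T]})=\length(\psi_x|_{[0,S]})=\int_0^S \frac{ds}{|\nabla f(\psi_x(s))|}.
\]

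To bound the integrand I would use that hypothesis (\ref{EqPsi}) reads $\Psi'(f(y))\,|\nabla f(y)|\geq 1$ for every $y\in M\setminus Z$ with $f(y)\in D_\Psi$. Since $\Psi$ is absolutely continuous, $D_\Psi$ has full Lebesgue measure, and the affine identity $f(\psi_x(s))=f(x)-s$ ensures that $f(\psi_x(s))\in D_\Psi$ for a.e.\ $s\in(0,S)$. Hence $|\nabla f(\psi_x(s))|^{-1}\leq \Psi'(f(x)-s)$ for a.e.\ $s\in(0,S)$, and the fundamental theorem of calculus for absolutely continuous functions gives
\[
\length(\alpha_x|_{[0,T]})\leq \int_0^S \Psi'(f(x)-s)\,ds=\Psi(f(x))-\Psi(f(\alpha_x(T)))\leq \Psi(f(x)),
\]
where the last inequality uses $\Psi\geq 0$. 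Letting $T$ increase to $\omega_x^+$ yields \eqref{Eqnu0}.

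The main obstacle I anticipate is handling the chain rule for $\Psi\circ f\circ\alpha_x$ when $\Psi$ is only absolutely continuous; a direct attempt would need composition results for AC maps together with a measure-zero argument to transfer the inequality from $f^{-1}(D_\Psi)$ to the $t$-line. The reparametrization supplied by Lemma \ref{Lem01} sidesteps this nicely: after the change of variable to $\psi_x$, the composition $\Psi\circ f\circ\psi_x$ is simply $s\mapsto \Psi(f(x)-s)$, so the FTC for AC functions applies directly and the only measure-theoretic point left is the trivial observation that $D_\Psi^c$ has Lebesgue measure zero.
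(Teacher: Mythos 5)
Your proof is correct and follows essentially the same route as the paper: reparametrize the trajectory by the $f$-level via Lemma \ref{Lem01}, bound $1/|\nabla f|$ along the reparametrized curve by $\Psi'(f(x)-s)$ using \eqref{EqPsi}, and conclude with the fundamental theorem of calculus for absolutely continuous functions. The only (cosmetic) difference is the choice of base field and density in Lemma \ref{Lem01} --- the paper takes $X=-\nabla f/|\nabla f|^2$ and $h=|\nabla f|$ so that the right endpoint $\nu^+_{x}$ of the unit-speed flow is the length, whereas you take $X=-\nabla f$, $h=1/|\nabla f|^2$ and compute the length as an explicit integral --- which amounts to the same computation.
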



\begin{proof}

Let $\varphi_{x_0}$ be an integral curve for $X:=-\frac{\nabla f}{|\nabla f|^2}$ starting at $x_0\in M\setminus Z$. Clearly, $\varphi_{x_0}$ is a reparametrization of $\alpha_{x_0}$. It is well-known (see \cite{Mi1}) that the trajectories for $X$ travel in $t$ units of time from $f$-level $a$ to $f$-level $a-t$. Hence $f(\varphi_{x_0}(r))=f(x_0)-r$. Setting $h=|\nabla f|$, we get from \eqref{EqPsi} that
\begin{align}
\label{Eqh} 
	|h(\varphi_{x_0}(r))|^{-1} & \leq |\Psi'(f(\varphi_{x_0}(r)))|\\[5pt]
							   & =\Psi'(f(x_0)-r)\nonumber
\end{align}
for every $r\in \R$ such that $f(x_0)-r\in D_\Psi$.
Then, equation  \eqref{Eqtheta} in Lemma~\ref{Lem01} and the Fundamental Theorem of Calculus yield that
\begin{align}
\label{Eqtheta2}
	\theta_{x_0}(t)& \leq \int_0^t\Psi'(f(x_0)-r)dr= \int_{f(x_0)-t}^{f(x_0)}\Psi'(u)du 
				 \nonumber \\[5pt]
				   & =\Psi(f(x_0))-\Psi (f(x_0)-t) 
\end{align}
for all $t\leq \omega_{x_0}^+$. Clearly, $\omega_{x_0}^+\leq f(x_0)$ since $f(\varphi_{x_0}(r))\geq 0$ for all $r\leq \omega_{x_0}^+$. Since $\theta_{x_0}$ is an increasing function, (\ref{Eqtheta2}) implies that
\begin{align}
\label{Eqnu}
\nu_{x_0}^+ & =\theta_{x_0}(\omega_{x_0}^+)\leq \theta_{x_0}(f(x_0))  \leq \Psi(f(x_0)). 
\end{align}

Note that $|Y|=1$ for $Y:= h X$, hence $\nu^+_{x_0}$ is the length of the curve $\alpha_{x_0}$, between the initial position until it leaves $M\setminus Z$.

\end{proof}


\begin{rem} 
  Kurdyka's proof in \cite{K} of the uniform boundedness of the lengths of the trajectories is based on the Mean Value Theorem, which, in turn, requires $\Psi$ to be (at least) $C^1$, whereas the proof of Proposition~\ref{EX:BOUNDED_LENGTHS} requires only absolute continuity from $\Psi$.  However, as we will see in Section~\ref{Sec5},
   there is no real  gain in weakening the regularity assumptions on $\Psi$ in Definition~\ref{DEF:KL_FUNCTION}.
\end{rem}

 The next result, inspired by Kurdyka's Proposition 3 in \cite{K}, is the first important ingredient for the proof of Theorem~\ref{T:MAIN_THM}. 


\begin{prop}
\label{propKu}
Let $(M,Z)$ be a relative $C^{k}$ manifold $(k\geq 2)$, let $f\colon (M,Z) \to ([0,\infty),\{0\})$ be a $C^2$, global  {\KL} function with  associated triple $(\rho,U,\Psi)$. For every $x_0\in M\setminus Z$ let $\alpha_{x_0}\colon (\mu_{x_0}^-,\mu_{x_0}^+)\ra M\setminus Z$ be the maximal solution of the Cauchy problem  
		\begin{align*}
		x'   & =-\nabla_xf,\\
		x(0) & =x_0.
		\end{align*}
 Then the following assertions hold:\vspace{5pt}

\begin{enumerate}
	
	\item There exists  an open neighborhood $V\subset f^{-1}([0,\rho))$ of $Z$ such that the integral curves of $-\nabla f$ originating at points in $V\setminus Z$ have forward-limit points on $\partial Z$.\vspace{5pt}
	
	\item The forward-limit mapping
	\begin{align*}
	\Lambda\colon V\setminus Z & \ra \partial Z\\[5pt] 
	x & \mapsto \displaystyle\lim_{t\ra \mu^+_{x}}\alpha_{x}(t)
	\end{align*} 
	 induces  a retraction $R\colon V\ra Z$.\vspace{5pt}
		
		\item The length function 
		\begin{align*}
		L\colon V\setminus Z & \ra (0,\infty)\\
		x & \mapsto \length(\alpha_x\bigr|_{[0,\mu_x^+)})
		\end{align*} 
		is continuous.
   
\end{enumerate} 
\end{prop}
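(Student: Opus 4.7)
The workhorse will be Proposition~\ref{EX:BOUNDED_LENGTHS}, which bounds the length of each forward trajectory by $\Psi(f(x_0))$. My plan is to combine this length bound with local compactness to trap trajectories in fixed compacta, so that finite length forces convergence to a point of $Z$; once such limits exist, a truncation argument fed by continuous dependence on initial conditions (Theorem~\ref{T:ODE1}) will deliver continuity of the retraction and of the length function.

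For part (1), I would select, for each $p\in\partial Z$, a compact neighborhood $K_p\subset U\cap f^{-1}([0,\rho))$ and constants $r_p>0$, $\delta_p\in(0,\rho)$ with $\overline{B(p,2r_p)}\subset K_p$ and $\Psi(\delta_p)<r_p$; the latter is available since $\Psi$ is continuous with $\Psi(0)=0$. Setting $V_p:=B(p,r_p)\cap f^{-1}([0,\delta_p))$, Proposition~\ref{EX:BOUNDED_LENGTHS} gives $\length(\alpha_{x_0})<r_p$ for every $x_0\in V_p\setminus Z$, so the whole forward orbit lies in $B(p,2r_p)\subset K_p$. Compactness of $K_p$ yields a subsequential accumulation point, and finite length upgrades this, via the Cauchy criterion, to an actual limit $\Lambda(x_0)\in K_p$ as $t\to\mu^+_{x_0}$. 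That $f(\Lambda(x_0))=0$ follows by contradiction: otherwise the orbit would remain in a compact subset of $M\setminus Z$ on which $|\nabla f|$ is bounded below by some $c>0$, forcing $f\circ\alpha_{x_0}$ to decrease linearly and hit zero in finite time. Being a limit of points outside $Z$, $\Lambda(x_0)\in\partial Z$. Then $V:=\inte(Z)\cup\bigcup_{p\in\partial Z}V_p$ is the desired open neighborhood of $Z$ contained in $f^{-1}([0,\rho))$.

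For part (2), I would extend $\Lambda$ by the identity on $Z$ to obtain $R$. Continuity on $\inte(Z)$ is immediate. For $x_n\to p\in\partial Z$, split the sequence into subsequences in $Z$ (trivial) and in $V\setminus Z$, where the uniform estimate $d(x_n,\Lambda(x_n))\leq L(x_n)\leq\Psi(f(x_n))\to 0$ forces $R(x_n)\to p$. At $x_0\in V\setminus Z$, fix $\varepsilon>0$ and choose $T<\mu^+_{x_0}$ with $\Psi(f(\alpha_{x_0}(T)))<\varepsilon/3$; by Theorem~\ref{T:ODE1}, $\alpha_y(T)$ stays uniformly close to $\alpha_{x_0}(T)$ for $y$ near $x_0$, and continuity of $\Psi\circ f$ gives $\Psi(f(\alpha_y(T)))<\varepsilon/3$. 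The triangle inequality, together with the tail bound $d(\alpha_z(T),\Lambda(z))\leq\Psi(f(\alpha_z(T)))$ for $z\in\{x_0,y\}$, then yields $d(\Lambda(y),\Lambda(x_0))<\varepsilon$.

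For part (3), I would use the decomposition
\[
L(x)=\int_0^T|\nabla_{\alpha_x(t)}f|\,dt+L(\alpha_x(T))
\]
for $T<\mu^+_x$. The first summand is continuous in $x$ by Theorem~\ref{T:ODE1} and the uniform continuity of $|\nabla f|$ on a compact subset of $M\setminus Z$ containing $\alpha_{x_0}([0,T])$ and all nearby $\alpha_y([0,T])$; the second is dominated by $\Psi(f(\alpha_x(T)))$, which can be made uniformly small near $x_0$ by the choice of $T$. The main obstacle throughout is arranging uniform confinement of nearby trajectories to a single compact subset of $M$ despite the a priori incompleteness of the ambient metric; this is what dictates the quantitative calibration $\Psi(\delta_p)<r_p$ with $\overline{B(p,2r_p)}\subset K_p$, after which the length bound of Proposition~\ref{EX:BOUNDED_LENGTHS} and the continuous dependence theorem combine cleanly to deliver all three assertions.
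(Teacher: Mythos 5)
Your proposal is correct, and it runs on the same engine as the paper's proof, namely the a priori bound $\length(\alpha_x)\le\Psi(f(x))$ from Proposition~\ref{EX:BOUNDED_LENGTHS}; however, each of the three steps is implemented differently. For (1), the paper passes to the metric completion $\widehat M$ and defines $V$ globally as $\{x\in f^{-1}([0,\rho)) : d(x,\partial M)>\Psi(f(x))\}$, whereas you assemble $V$ from local pieces $V_p=B(p,r_p)\cap f^{-1}([0,\delta_p))$ calibrated so that $\Psi(\delta_p)<r_p$ and $\overline{B(p,2r_p)}$ is compact; this trades the completion for local compactness and is close in spirit to the authors' own remark after the proof that one may take $V=\bigcup_p V_p$ to drop the global \KL{} hypothesis. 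For continuity of $R$ at a point of $V\setminus Z$, the paper uses a ball-crossing argument built on the auxiliary function $d(\cdot,\Omega_\varepsilon^c)-g(\cdot)$; your version (fix $T$ with $\Psi(f(\alpha_{x_0}(T)))<\varepsilon/3$, apply Theorem~\ref{T:ODE1} at time $T$, and use the tail bound on both trajectories) is the standard $3\varepsilon$ rendering of the same idea and is arguably cleaner. For (3), the paper reparametrizes by $-\nabla f/|\nabla f|^2$, writes $L(x)=\int_0^{f(x)}|\nabla_{\varphi_x(r)} f|^{-1}\,dr$, and invokes dominated convergence with majorant $\Psi'(f(x)-r)$; your split $L(x)=\int_0^T|\nabla_{\alpha_x(t)}f|\,dt+L(\alpha_x(T))$ avoids both the reparametrization and the explicit integrable majorant, at the cost of quoting the tail bound once more. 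One point you should make explicit in (1): Proposition~\ref{EX:BOUNDED_LENGTHS} assumes the \KL{} inequality on all of $M\setminus Z$, so invoking it for a trajectory presupposes that the trajectory stays in $U\cap f^{-1}((0,\rho))$ --- which is exactly what the confinement in $B(p,2r_p)\subset K_p$ is meant to deliver. This mild circularity is resolved by the usual bootstrap (let $T^*$ be the supremum of times up to which the orbit remains in $B(p,2r_p)$ and observe that the strict length estimate forces $T^*=\mu^+_{x_0}$); the paper elides the same point, but your write-up should not.
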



\begin{proof} 


For (1), consider the triple    $(\rho,U,\Psi)$  associated to the global {\KL} function $f\colon M\to [0,\infty)$ and set $g=\Psi\circ f$.  Fix $x_0\in U\setminus Z$ and let $L_{x_0}$ be the length of the curve $\alpha_{x_0}\bigr|_{[0,\mu_{x_0}^+)}$. We know from \eqref{Eqnu0} that $L_{x_0}\leq g(x_0)$. 

Let $\psi_{x_0}\colon [0,L_{x_0})\ra M$ be the arc length reparametrization of $\alpha_{x_0}\bigr|_{[0,\mu_{x_0}^+)}$. 
 
 Let $\widehat{M}$  be the metric completion of $M$. Note that $\psi_{x_0}\colon [0,L_{x_0})\rightarrow M\subset \widehat{M}$ is Lipschitz, hence uniformly continuous.  Since $L_{x_0}<\infty$, by a standard result, the curve $\psi_{x_0}$ has a unique continuous extension
\[ 
	\widehat{\psi}_{x_0}\colon [0,L_{x_0}]\ra \widehat{M}.
\]
 
Since $M$ is locally compact, $M$ is open in $\widehat{M}$, and hence $\partial M=\widehat{M}\setminus M$.  Let  $d(\cdot,\partial M)$ be the distance to $\partial M$ and set $d(x,\partial M)=\infty$ if $\partial M=\varnothing$. Consider now the set
 \begin{align}
 \label{EQ:DEF_V}
 V=\left\{ x\in f^{-1}([0,\rho)) \mid d(x,\partial M)> {g(x)}\right\}.
 \end{align}

 Notice now that $V\subset M$ is open and contains $f^{-1}(0)=g^{-1}(0)$. Indeed, since $M$ is locally compact, $\partial M$ is closed in $\widehat{M}$. Therefore, if $x\in \widehat{M}$ then $d(x,\partial M)=0$ if and only if $x\in\partial M$. 
 
  Let $x_0\in V\setminus f^{-1}(0)$. We show that $\widehat{\psi}(L_{x_0})\in M$. Suppose that this is not the case, i.e.,  $\widehat{\psi}_{x_0}(L_{x_0})\in \partial M$. One gets a contradiction from the following string of inequalities
 \begin{align*} 
 \length(\psi_{x_0})  & =\length(\widehat{\psi}_{x_0})\\[2pt]
 						& \geq d(x_0,\widehat{\psi}_{x_0}(L_{x_0}))\\[2pt]
 						& \geq d(x_0,\partial M)\\[2pt]
 						& > g(x_0) \\[2pt]
 						& \geq \length(\psi_{x_0}).
\end{align*}

Hence $\widehat{\psi}_{x_0}(L_{x_0})\in M$ for every $x_0\in V\setminus Z$. We deduce from the maximality of $\alpha_{x_0}$ that $\widehat{\psi}_{x_0}(L_{x_0})\in Z= f^{-1}(0)=g^{-1}(0)$.  This concludes the proof of assertion (1).


 With $V\subset M$ as in \eqref{EQ:DEF_V} above, consider the mapping $R\colon V\to Z$ given by 
\[
 R(x):=
 \begin{cases}
 \displaystyle\lim_{t\ra \mu_x^+}\alpha_x(t),  & x\in V\setminus Z;\\
												x, & x\in Z.
\end{cases}
\] 

By the proof of item (1),  $R$  is well-defined.  To prove the continuity of $R$ we proceed as follows.  Fix $x_0\in V$, let $\varepsilon>0$, and consider the open ball  in $Z$ centered at $R(x_0)$ given by
\begin{align}
\label{EQ:OMEGA_EPSILON}
\Omega_{\varepsilon}=\{x\in Z\mid d(x,R(x_0))<\varepsilon\}.
\end{align}
If $x_0\in \Int(Z)$, there is nothing to prove.  If $x_0\in \partial Z$, let 
\[
	U=\{x\in V\mid {g(x)}+d(x,x_0)<\varepsilon\}.
\]
Since $g$ is continuous, $U$ is an open subset of $M$ and contains $x_0$. Then, for all $ x\in U$,
\begin{align*}
d(R(x_0),R(x)) & = d(x_0,R(x)) \\[2pt]
& \leq d(x_0,x)+d(x,R(x))\\[2pt]
& \leq d(x_0,x)+\length(\alpha_x)\\[2pt]
& \leq d(x_0,x)+g(x)\\[2pt]
& <\varepsilon. 
\end{align*}
Hence, $R(x)\in \Omega_{\epsilon}$, proving that $R$ is continuous at $x_0\in \partial Z$.

Finally, if $x_0\in V\setminus Z$  let 
$\Omega_{\varepsilon}^c:=Z\setminus \Omega_{\varepsilon}$. Notice that the function $h\colon V\to \R$  given by
\[
	h(y)=d(y, \Omega_{\varepsilon}^c)-{g(y)}
\]	
is continuous  and $h(R(x_0))>0$.
Hence, one can find  $x_1\in V\setminus Z$  close to $R(x_0)$ and on the integral curve  determined by $x_0$, along with a sufficiently small ball $B_1\subset V\setminus Z$ around $x_1$ such that
\begin{align}
\label{eqn1} B_1\subset \{y\in V \mid d(y,\Omega_{\varepsilon}^c)>{g(y)}\}.
	\end{align}

On the other hand, we have, for every $y\in B_1$,
\begin{align*}
  d(y,R(y)) & \leq \length(\alpha_{y})\\[2pt]
  			&  \leq {g(y)}\\[5pt] 
  			& < d(y,\Omega^c_{\varepsilon}).
\end{align*}

This can only mean that $R(y)\in \Omega_{\varepsilon}$ for all $y\in B_1$. 
Now, given $B_1$, there exists an open ball $B_0$ around $x_0$ such that all trajectories that start in $B_0$ will cross $B_1$. Hence the function $R$ will take $B_0$ to $\Omega_{\epsilon}$. Thus, $R$ is continuous also on $V\setminus Z$. Clearly, $R$ is a retraction. This concludes the proof of item (2).

For item (3), we use the notation of Proposition \ref{EX:BOUNDED_LENGTHS}. Observe that $\omega_{x}^+=f(x)$ for every $x\in V\setminus Z$. Note that the length function coincides on $V\setminus Z$ with the function
\begin{align*} 
	\nu^+(x) & =\theta_{x}(\omega_x^+)
		      =\int_0^{f(x)}\frac{1}{h(\varphi_x(r))}~dr \label{EQ:FCN_NU_PLUS},
\end{align*}
where $h=|\nabla f|$.  By  inequality (\ref{Eqh}), we can use the Lebesgue Dominated Convergence Theorem for the continuous family of functions $H_x(r):=h(\varphi_x(r))^{-1}$ to conclude that $\nu^+$ is indeed continuous.
\end{proof}


\begin{remark} If $M$ is a $C^2$ manifold  to begin with and $f\colon (M,Z)\to (\R,\{0\})$ is  a nonnegative $C^{1,1}$ function on $M$, then $\nabla f$ exists, is continuous on $M$, and equals $0$ on the minimal locus $f^{-1}(0)$.
 Therefore, if $f$ is a {\KL} function, the negative gradient flow is forward-complete in $V$. This is because a trajectory of finite length  will stay within a compact subset of the domain of definition of the vector field $-\nabla f$. This vector field is defined everywhere on $V$ and continuous. Hence, the trajectory will exist  for all $t>0$.
\end{remark}

Examining the proof of Proposition \ref{propKu}, one notices that one can easily remove the global \KL{} condition from its hypotheses by taking $V=\bigcup_{p\in \partial Z} V_p$, where 
$V_p$ is the neighborhood on which the retraction $R$ is defined under a local \KL{} nondegeneracy condition.


\begin{cor}
The statements of  Proposition~\ref{propKu} hold for  general {\KL} functions.
\end{cor}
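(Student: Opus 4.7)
The plan is to localize the proof of Proposition~\ref{propKu} around each point $p \in \partial Z$, building a local neighborhood $V_p$ and a local retraction there, and then to glue these pieces together, along the lines of the hint preceding the statement. For each $p \in \partial Z$, {\KL}-nondegeneracy supplies a triple $(\rho_p, U_p, \Psi_p)$; write $g_p := \Psi_p \circ f$. Mimicking the definition of $V$ in~\eqref{EQ:DEF_V}, I would set
\[
V_p := \bigl\{ x \in f^{-1}([0,\rho_p)) \cap U_p \,:\, d_{\widehat{M}}\bigl(x,\, \widehat{M} \setminus U_p\bigr) > g_p(x) \bigr\},
\]
where $\widehat{M}$ denotes the metric completion of $M$. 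Replacing $\partial M$ by $\widehat{M} \setminus U_p$ is the essential modification: the set $\widehat{M} \setminus U_p$ encodes, in a single obstruction, both the ambient boundary and the boundary of the region where the local {\KL} inequality holds. As in the proof of Proposition~\ref{propKu}, continuity of $g_p$ shows that $V_p$ is open in $M$ and contains $Z \cap U_p$.

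Next, I would reprove items~(1)--(3) of Proposition~\ref{propKu} on each $V_p$. The only step that requires adjustment is the length bound: for $x \in V_p \setminus Z$, Proposition~\ref{EX:BOUNDED_LENGTHS} applied with the local triple $(\rho_p, U_p, \Psi_p)$ still yields $\length(\alpha_x\bigr|_{[0,s)}) \leq g_p(x)$ \emph{as long as} $\alpha_x([0,s)) \subset U_p$, since its proof only invokes the gradient inequality where it is known to hold. The defining inequality of $V_p$ then reproduces the contradiction chain from Proposition~\ref{propKu}(1): any attempted escape of $\alpha_x$ from $U_p$ would force $\length(\alpha_x) \geq d_{\widehat{M}}(x, \widehat{M} \setminus U_p) > g_p(x)$, contradicting the length bound. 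Hence $\alpha_x$ stays in $U_p$, has finite length, and by local compactness of $M$ and maximality of the trajectory its forward-limit lies in $Z \cap U_p \subset \partial Z$. The continuity arguments for the local retraction $R_p \colon V_p \to Z$ and the local length function $L_p \colon V_p \setminus Z \to (0,\infty)$ then transfer verbatim.

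Finally, I would set $V := \Int(Z) \cup \bigcup_{p \in \partial Z} V_p$, which is an open neighborhood of $Z$. The maps $R$ and $L$ are intrinsically determined by the $-\nabla f$-trajectory of each point, so on any overlap $V_p \cap V_q$ the locally defined pieces coincide automatically, and continuity of $R$ and $L$ on $V$ reduces to continuity on each $V_p$, already established. The one place one must genuinely take care is precisely this localization of the length bound: distance in the definition of $V_p$ must be measured to $\widehat{M} \setminus U_p$ rather than to $\partial M$ alone, so that the trajectory cannot exit the region where the local {\KL} inequality supplies the needed control. Once this adjustment is in place, the remainder of the proof of Proposition~\ref{propKu} applies verbatim on each $V_p$.
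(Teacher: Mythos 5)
Your proposal is correct and follows essentially the same route as the paper, which simply remarks that one takes $V=\bigcup_{p\in\partial Z}V_p$ with $V_p$ the neighborhood produced by the local {\KL} data. Your extra care in defining $V_p$ via the distance to $\widehat{M}\setminus U_p$ (so that trajectories cannot exit the region where the local gradient inequality is available before their length budget $g_p(x)$ is exhausted) is precisely the detail the paper's one-line argument leaves implicit, and your observation that $R$ and $L$ glue automatically because they are determined by the trajectory itself is the right justification for the patching.
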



\section{Existence of mapping cylinder neighborhoods}
\label{S:EXISTENCE_MCN}

Proposition~\ref{propKu}  is the main step in the natural generalization of Kurdyka's strong deformation retract result \cite[Proposition 3]{K} to the class of {\KL} functions. In fact, one could have easily gone an extra step already in Proposition~\ref{propKu} in order to recover the full statement from \cite{K}. However, since this is a direct corollary to Theorem~\ref{T:MAIN_THM}, we chose to skip it. One comment is in order. One  consequence of the fact $Z$ has a mapping cylinder neighborhood  is that the pair $(M,Z)$ is an NDR (neighborhood deformation retract) pair (see \cite[Definition 6.2]{KD}), or equivalently
a closed cofibration (provided the space under consideration is compactly generated, for example, Hausdorff and locally compact).  One then could ask the question of whether wildly embedded submanifolds, such as the Alexander horned sphere, could be excluded as the zero locus of a K{\L} function based on this property alone. The answer is no and the reason is that a closed subspace of an ANR (absolute neighborhood retract) is also an ANR if and only if it is a closed cofibration (see \cite[Proposition A.6.7]{FP}).  The Alexander horned sphere, like any  topological manifold, is an ANR (see \cite[Ch.~III, Corollary 8.3]{Hu_ST}).

After these preliminary comments, we are ready to prove Theorem~\ref{T:MAIN_THM}.  Let $f\colon (M,Z)\ra ([0,\infty),\{0\})$ be a  {\KL} (relative) function of class $C^2$. For the following result we will keep the notation of Proposition \ref{propKu}.


\begin{prop} 
\label {P:existH}
There exists a  $C^1$ hypersurface $H\subset V\setminus f^{-1}(0)$ with the following properties: 
\begin{enumerate}
	\item  The hypersurface $H$ intersects transversely every trajectory $\alpha$ of $-\nabla f$ in $V$ exactly once.
	\item The restriction $R\bigr|_{H}\colon H\ra \partial f^{-1}(0)$  of the retraction $R\colon V\to f^{-1}(0)$ is a surjective, proper function.
	\end{enumerate}
\end{prop}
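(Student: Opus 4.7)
The natural candidate for $H$ is a regular level set of $f$. I fix a sufficiently small $c\in(0,\rho)$ and set $H:=f^{-1}(c)\cap V$. Because the {\KL} condition combined with $\nabla(\Psi\circ f)=\Psi'(f)\nabla f$ forces $\nabla f\neq 0$ on $V\setminus f^{-1}(0)$, the value $c$ is regular for $f|_{V\setminus f^{-1}(0)}$, so $H$ is a $C^{k-1}$ (hence $C^1$) embedded hypersurface. Transversality is automatic, since $\nabla f$ is nonzero and normal to $H$, and uniqueness of intersection follows from the strict monotonicity
\[
\frac{d}{dt}f(\alpha(t))=-|\nabla f(\alpha(t))|^2<0
\]
along every integral curve of $-\nabla f$ in $V\setminus f^{-1}(0)$.

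For the ``at least once'' half of property~(1) I would, after possibly shrinking $V$, replace it by the flow-out of $H$,
\[
V_H:=f^{-1}(0)\,\cup\,\{\varphi(t,y):y\in H,\; t\in(\tau^-_y,\mu^+_y)\},
\]
where $\varphi$ is the flow of $-\nabla f$ and $\tau^-_y$ is the maximal backward time the trajectory through $y$ remains in $V\setminus f^{-1}(0)$. Since $H$ is a transverse section, $(y,t)\mapsto\varphi(t,y)$ is a local diffeomorphism, $V_H\setminus f^{-1}(0)$ is open, and by construction every trajectory in it meets $H$ exactly once. The nontrivial point is that $V_H$ is a \emph{neighborhood} of $f^{-1}(0)$: for $p\in\partial f^{-1}(0)$, one must check that points near $p$ sit on trajectories whose backward extension reaches the $c$-level. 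Using the $-\nabla f/|\nabla f|^2$ reparametrization from the proof of Proposition~\ref{EX:BOUNDED_LENGTHS}, this backward extension has length bounded by $\Psi(c)-\Psi(f(y))\le\Psi(c)$; for $c$ small enough, the distance inequality $d(\cdot,\partial M)>g(\cdot)$ defining $V$ keeps the extension inside $M$.

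For property~(2), properness of $R|_H$ is a direct consequence of the length bound of Proposition~\ref{EX:BOUNDED_LENGTHS}: for every $x\in H$, $d(x,R(x))\le L(x)\le\Psi(c)$, so for any compact $K\subset\partial f^{-1}(0)$,
\[
(R|_H)^{-1}(K)\subset H\cap\overline{\{x\in M:d(x,K)\le\Psi(c)\}}.
\]
For $c$ small this sits inside a compact subset of $M$ by local compactness of $M$ and, being closed in $H$, is compact. Surjectivity is then obtained by density plus closedness: for each $p\in\partial f^{-1}(0)$, a sequence $y_n\to p$ in $V_H\setminus f^{-1}(0)$ (which exists because $p\in\overline{M\setminus f^{-1}(0)}$ and $V_H$ is a neighborhood of $p$) meets $H$ at points $x_n$ with $R(x_n)=R(y_n)\to p$; so $R(H)$ is dense in $\partial f^{-1}(0)$, and, being closed by properness, must equal $\partial f^{-1}(0)$.

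I expect the most delicate step to be the existence-of-crossing argument, that is, verifying that the backward trajectory through every point near $f^{-1}(0)$ actually reaches the $c$-level while remaining in the domain of $-\nabla f$. This forces a careful smallness constraint on $c$ and requires a balanced use of the {\KL} inequality, the distance condition defining $V$, and local compactness of $M$. Once it is in place, transversality, properness, and density are routine consequences of the estimates already collected in Propositions~\ref{propKu} and~\ref{EX:BOUNDED_LENGTHS}.
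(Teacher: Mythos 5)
There is a genuine gap, and it sits exactly at the step you yourself flag as delicate: the claim that, for a single sufficiently small $c$, the flow-out of $H=f^{-1}(c)\cap V$ is a neighborhood of $f^{-1}(0)$. Your length estimate is correct: the backward trajectory of $-\nabla f$ from a point $y$ with $f(y)<c$ up to the level $c$ has length at most $\Psi(c)-\Psi(f(y))\le\Psi(c)$, so it reaches the level $c$ inside $M$ provided $d(y,\partial M)>\Psi(c)$ (otherwise it may converge to a point of $\partial M=\widehat M\setminus M$ before getting there). But $c$ is fixed once and for all, while $d(p,\partial M)$ can tend to $0$ as $p$ ranges over $\partial Z$, since $(M,d)$ is only assumed locally compact, not complete. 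Concretely, take $M=\{(x,y)\in\R^2: |y|<h(x)\}$ with $h>0$, $h(x)\to 0$ as $x\to\infty$, $Z=M\cap\{y=0\}$ and $f(x,y)=y^2$: the backward trajectory through $(x,y)$ is vertical and exits $M$ before reaching $f^{-1}(c)$ whenever $h(x)<\sqrt{c}$, so for every fixed $c>0$ there are points arbitrarily close to $Z$ that never flow back to the level $c$, and $V_H$ is not a neighborhood of $f^{-1}(0)$. The same quantifier problem infects your properness argument: $(R|_H)^{-1}(K)\subset\{x:d(x,K)\le\Psi(c)\}$ is compact only when $\Psi(c)$ is small compared to $d(K,\partial V)$, which depends on the arbitrary compact $K$, whereas $c$ does not.

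This is precisely why the paper does not use a single level set. It forms the leaf space $\mathcal H=(V\setminus f^{-1}(0))/\!\sim$ of trajectories, takes an exhaustion function and a subordinate partition of unity $\{\phi_{2k+1}\}$, chooses a decreasing sequence $c_n\searrow 0$ adapted to the relatively compact pieces of the exhaustion, and defines $H=\widehat f^{-1}(1)$ with $\widehat f=f\cdot\sum_k\phi_{2k+1}/c_{2k+1}$. The resulting hypersurface agrees locally with level sets $f^{-1}(c_n)$ but "dips down" to lower and lower levels as one goes to infinity in $\mathcal H$, which is what guarantees that \emph{every} trajectory crosses it; your transversality and monotonicity observations then apply verbatim to $\widehat f$ because $\widehat\Phi$ is constant along trajectories. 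Your argument would be correct as written under an extra hypothesis such as $\partial M=\varnothing$ (equivalently $M$ complete) or, more generally, $\inf_{p\in\partial Z}d(p,\partial M)>0$; to prove the proposition in the stated generality you need a variable-level construction of the above kind (or an equivalent device).
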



\begin{proof} For item (1),  we will write $f$ for the restriction $f\bigr|_{V}$,  which takes values in $[0,\rho)$. By the Implicit Function Theorem, the hypersurfaces $f^{-1}(c)$ are $C^1$ for $c\in(0,\rho)$.

Define first an ``abstract'' $C^1$ manifold $\mathcal{H}$ of dimension $n-1$ as the inductive limit of  the manifolds $f^{-1}(c)$ as $c\searrow 0$, $c\in (0,\rho)$.   Alternatively, we may describe $\mathcal{H}$ as follows.

 If $c_1>c_2>0$, then the flow function gives an open $C^1$ embedding
\begin{align*}
f^{-1}(c_1) \hookrightarrow f^{-1}(c_2)\\[2pt]
  \qquad x  \mapsto \alpha_x\cap f^{-1}(c_2),
\end{align*}
where $\alpha_x$ is the trajectory determined by $x$.

On $V\setminus f^{-1}(0)$ consider the equivalence relation in which $x\sim y$ if  the two points $x,y$ lie in the same trajectory. 
Take $\mathcal{H}:=(V\setminus f^{-1}(0))/\sim$ with the induced quotient topology. A set $U\subset \mathcal{H}$ is open if and only if $\widehat{U}:=\pi^{-1}(U)$ is a flow-invariant, open subset of $V\setminus f^{-1}(0)$, where 
\[
\pi\colon V\setminus f^{-1}(0)\ra \mathcal{H}
\] is the canonical quotient map. Note that the topology is  Hausdorff. Moreover, since $f$ has no critical points  in $V\setminus f^{-1}(0)$, the flow is locally a product, which implies that $\mathcal{H}$ is locally homeomorphic to $\bR^{n-1}$. Now, we need an atlas. For a fixed $c \in (0,\rho)$, we take the usual charts  in $f^{-1}(c)$  and project them via $\pi$ to $\mathcal{H}$.  It is not hard to verify that these charts are $C^1$-compatible, because the flow function is $C^1$, thus concluding that $\mathcal{H}$ is a $C^1$ manifold. Moreover, $\pi\colon V\setminus f^{-1}(0)\ra \mathcal{H}$ is a $C^1$ function. 

Let $h\colon\mathcal{H}\ra [0,\infty)$ be an exhaustion function, i.e., a continuous, proper  function. Let $\mathbb{N}/2:= \{n/2 \mid n\in \mathbb{Z}_{\geq 0}\}$ and,  for every $k\in \mathbb{N}/2$, take the covering of $\mathcal{H}$ by the relatively compact, open sets
\[ 
A_{2k+1}:=h^{-1}((k-1/3,k+1/3)).
\]
We let $\{\phi_{2k+1}\}_{k\in \mathbb{N}/2}$ be a $C^1$ 
partition of unity subordinate to the covering $\{A_{2k+1}\}_{k\in  \mathbb{N}/2}$ of $\mathcal{H}$. Note that  each point $x\in\mathcal{H}$ belongs to at most two of the sets $A_{2k+1}$ and, consequently, at most two of the functions $\phi_{2k+1}$ satisfy $\phi_{2k+1}(x)\neq 0$. In fact, one can find a sufficiently small neighborhood of $x$ that intersects at most two of the sets $A_{2k+1}$ nontrivially.
By composing the $\phi_{2k+1}$ with $\pi$, we get $C^1$ functions
\begin{align*}
\widehat{\phi}_{2k+1}\colon V\setminus f^{-1}(0)\ra [0,1],\\[2pt]
\widehat{\phi}_{2k+1}:=\phi_{2k+1}\circ \pi.
\end{align*}
We will also consider the open sets $\widehat{A}_{2k+1}:=\pi^{-1}(A_{2k+1})$ in $V\setminus f^{-1}(0)$. For $n\in \mathbb{N}$, let
\[ 
	\widehat{U}_{n}:=\bigcup_{2k+1\leq n} \widehat{A}_{2k+1}.
\]
Clearly, the sets $\widehat{U}_n$ are not  relatively compact in $f^{-1}(0,\rho)$. 

We will need the following properties of compact subsets $K$ of $\mathcal{H}$, which we will presently prove:

\begin{eqnenumerate}
	\item There exists $c>0$ such that $K\subset \pi(f^{-1}(c))$. In particular, every trajectory of $-\nabla f$ in $\pi^{-1}(K)$ intersects $f^{-1}(c)$.\label{IT:COMPACT_1}
	\item The intersection $\pi^{-1}(K)\cap f^{-1}(c)$ is a nonempty compact set in $f^{-1}(c)$. \label{IT:COMPACT_2}
\end{eqnenumerate}

 To prove \ref{IT:COMPACT_1}, observe that the images of all preimages $f^{-1}(c)$ via $\pi$ are open subsets of $\mathcal{H}$. Their union covers $\mathcal{H}$ and, since $K$ is compact, there will exist a $c$ such that $K\subset \pi(f^{-1}(c))$. Indeed, first, compactness implies that $K$ is contained in the union of finitely many sets $\pi(f^{-1}(a_i))$, with $a_i\in (0,\rho)$, $1\leq i\leq k$ for some positive integer $k$. We can arrange for the $a_i$ to satisfy $a_{i+1}<a_{i}$, which implies that $ \pi(f^{-1}(a_{i}))\subset\pi(f^{-1}(a_{i+1}))$. It follows that $\bigcup_{i=1}^k\pi(f^{-1}(a_i))= \pi(f^{-1}(a_k))$. The second claim in \ref{IT:COMPACT_1} is now an immediate consequence.
	
	To verify \ref{IT:COMPACT_2}, note that $\pi\bigr|_{f^{-1}(c)}$ is a homeomorphism onto its image  and 
	\[
	\pi^{-1}(K)\cap f^{-1}(c)=(\pi\bigr|_{f^{-1}(c)})^{-1} (K).
	\]

By properties \ref{IT:COMPACT_1} and \ref{IT:COMPACT_2}, since the sets $U_n:=\bigcup_{2k+1\leq n} A_{2k+1}$ are relatively compact in $\mathcal{H}$, there exists  a decreasing sequence $\rho>c_1>c_2>\ldots$ of positive numbers with  $c_n\searrow 0$ and satisfying the following conditions:

 \begin{itemize}
 \item The sets $\widehat{U}_n\cap f^{-1}(c_n)$ are relatively compact in $f^{-1}(c_n)$.\smallskip
 
 \item Every trajectory of $-\nabla f$ in $\widehat{U}_n$ intersects $f^{-1}(c_n)$.
 \end{itemize}

Define a positive function $\widehat\Phi$ on $V\setminus f^{-1}(0,\rho)$ by letting
\[ \widehat{\Phi}:=\sum_{k\in \bN/2} \frac{\widehat{\phi}_{2k+1}}{c_{2k+1}}.
\]
Recall that, by construction,  at any given point in $V\setminus f^{-1}(0)$, at most two of the functions $\widehat{\phi}_{2k+1}$ are nonzero.
Thus, the function  
\begin{equation}
\label{ftilde}
\widehat{f}:= f\widehat{\Phi}
\end{equation}
is well-defined and $C^1$ on $V\setminus f^{-1}(0)$.
Note that $\widehat{\Phi}$ is constant on the trajectories of $-\nabla f$, which implies that 
\[
	\langle\nabla \widehat{\Phi},\nabla f\rangle=d\widehat{\Phi}(\nabla f)=0.
\]
Hence 
\[\langle\nabla \widehat{f},\nabla f\rangle=\widehat{\Phi}|\nabla f|^2>0 \mbox { on }  V\setminus f^{-1}(0).
\]
Therefore, $\widehat{f}$ is strictly decreasing along the trajectories of $-\nabla f$. Moreover, $\widehat{f}$ can be continuously extended by $0$ along $Z$.

Define $H:=\widehat{f}^{-1}(1)$. By what was just said, \emph{if} a trajectory of $-\nabla f$ intersects $H$ it will do so only once and transversely.  

Let $\alpha\colon(\mu^-,\mu^+)\ra V\setminus f^{-1}(0)$ be a trajectory of $-\nabla f$. Clearly,
\[
\lim_{t\ra \mu^+}\widehat{f}(\alpha(t))=0.
\]
We will now show that
\begin{equation}
\label{lm1}
\lim_{t\searrow \mu^-}\widehat{f}(\alpha(t))>1.
\end{equation}
From (\ref{lm1}) and the fact that $\widehat{f}$ is strictly decreasing, it follows that there exists $t_0\in\R$ such that $\widehat{f}(\alpha(t_0))=1$ or, equivalently, that every trajectory $\alpha$ intersects $H$.

 In order to prove (\ref{lm1}), let $i\in \bN/2$ such that $p=\pi(\alpha(t))\in  A_{2i+1}\cap A_{2i+2}$. We have
 \begin{align}
 \label{EQ:F_HAT}
 \widehat{f}(\alpha(t))=f(\alpha(t))\widehat{\Phi}(\alpha(t))=f(\alpha(t))\left(\frac{\phi_{2i+1}(\pi(\alpha(t)))}{c_{2i+1}}+\frac{\phi_{2i+2}(\pi(\alpha(t)))}{c_{2i+2}}\right).
\end{align}
 By the choice of the constants $c_i$ we know that the trajectory $\alpha$ which lies in $\widehat{U}_{2i+1}$ intersects $f^{-1}(c_{2i+1})$ at a time $t'$. Hence, for $t<t'$ we have $f(\alpha(t))>f(\alpha(t'))=c_{2i+1}$. Therefore, for $t<t'$ we have,  by \eqref{EQ:F_HAT},
 \begin{align*}
 \widehat{f}(\alpha(t))  >c_{2i+1}\left(\frac{b(t)}{c_{2i+1}}+\frac{1-b(t)}{c_{2i+2}}\right)
 					 > c_{2i+1}\frac{b(t)+(1-b(t))}{c_{2i+1}}
 					 =1,
  \end{align*}
 where $b(t)\equiv \phi_{2i+1}(\pi(\alpha(t)))$.
  This proves \eqref{lm1} in the case where $p\in  A_{2i+1}\cap A_{2i+2}$. The case where $p\in A_{2i+1}\setminus (A_{2i}\cup A_{2i+2})$ is dealt with in a similar fashion. This finishes the proof of item (1).

We now prove item (2). Let us first show that $R\bigr|_{H}\colon H\to \partial f^{-1}(0)$ is proper. 
Granted this, since a proper continuous function to a locally compact Hausdorff space is closed, it will suffice to prove that the image of $R\bigr|_{H}$ is dense to conclude that $R\bigr|_{H}$ is also surjective.

 We notice first that the continuous bijection $\pi\bigr|_{H}:H\ra \mathcal{H}$ is a homeomorphism. This can be seen by noting that the flow function produces an open embedding $f^{-1}(c)\hookrightarrow H$ for every $c\neq 0$  and, for every point $p\in\mathcal{H}$, one finds an open neighborhood of $p\in \pi(f^{-1}(c))$. 
 
 Let $K\subset f^{-1}(0)$ be compact. By the previous paragraph, it will be enough to prove that, for small $a>0$,   the preimage $R^{-1}(K)\cap f^{-1}(a)=(R\bigr|_{f^{-1}(a)})^{-1}(K)$
 is compact in $f^{-1}(a)\subset V$.  Since the level sets of $f$ equal the level sets of $g$, we can reformulate the last sentence with $g$ instead of $f$. Since $R^{-1}(K)\cap g^{-1}(a)$ is closed, the only way it can fail to be compact is if there exists a sequence $\{x_n\}\subset R^{-1}(K)\cap g^{-1}(a)$ such that $
 x_n\ra \partial g^{-1}(a)$. By this we mean that $x_n$ eventually gets out  of every compact subset of $g^{-1}(a)$. Then, clearly,
  \begin{align}
  \label{eqf1}
  \liminf d(x_n,K)\geq  d(K, \partial g^{-1}(a))\geq d(K,\partial V)>0,
  \end{align}
 where $\partial V:=\widehat{V}\setminus V$ is the boundary of $V$ within its metric completion. The last inequality is a consequence of the local compactness of $M$ and, therefore, of its open subset $V$. Indeed, one has $d(x,\partial V)>0$ if $x\in V$, and it follows easily that $d(K,\partial V)>0$ for every compact  $K\subset V$. 
  
  Take now $a_1>0$ such that
 \[ {a_1}< {d(K,\partial V)}.
 \]
By \eqref{Eqnu},
 the distance  to $K$ from any point $p$ in $R^{-1}(K)\cap g^{-1}(a_1)$  (which, in particular, determines a trajectory with end-point in $K$) will be at most $a_1$ and hence strictly smaller than $d(K,\partial V)$. Therefore, \eqref{eqf1} cannot be fulfilled for any sequence of points in $R^{-1}(K)\cap g^{-1}(a_1)$. Thus, we conclude that $R\bigr|_{H}$ is proper.

Let us now prove that the image of $R\bigr|_{H}$ is dense in $f^{-1}(0)$. By item (1), the image of $R\bigr|_{H}$ equals the image of $R$. Take a point $p\in f^{-1}(0)$ and take an open  ball around $p$. Then, by the uniform bound on the lengths of trajectories of $-\nabla f$, there exists a smaller ball around $p$ such that  all the trajectories starting in the smaller ball cannot leave the bigger ball. Hence the image of the limit function $R$ is dense and this concludes the proof.

\end{proof}


 The surjectivity of $R$ has the following trivial consequence.
 \begin{cor}
 	\label{csL} 
 	If $p\in \partial Z$ is K{\L} nondegenerate, then there exists a $C^1$ relative curve $\gamma\colon ([0,\epsilon),\{0\})\ra (M,Z)$ such that $|\gamma'|\leq 1$ and $\gamma(0)=p$. 
 \end{cor}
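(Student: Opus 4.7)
The plan is to construct $\gamma$ by running a negative gradient trajectory of $f$ that converges to $p$ \emph{backwards} in arc length.  The \KL{} nondegeneracy at $p$ provides a triple $(\rho,U,\Psi)$ for which $|\nabla_x(\Psi\circ f)|\geq 1$ on $f^{-1}((0,\rho))\cap U$; in particular $\nabla f\neq 0$ on $U\setminus Z$, so the analysis of Propositions~\ref{propKu} and~\ref{P:existH} applies in a neighborhood of $p$ (the proofs are local, as already indicated by the corollary following Proposition~\ref{propKu}).  By the local analogue of the surjectivity statement in Proposition~\ref{P:existH}(2), the retraction $R$, restricted to the transverse hypersurface $H$, hits $p$: there exists $x\in H$ with $R(x)=p$.

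The maximal integral curve $\alpha_x\colon[0,\mu_x^+)\to M\setminus Z$ of $-\nabla f$ starting at $x$ then has forward limit $p$ and finite length $L\leq\Psi(f(x))$ by Proposition~\ref{EX:BOUNDED_LENGTHS}.  Since $|\nabla f|$ is continuous and strictly positive along $\alpha_x$, the arc-length reparametrization $\psi_x\colon[0,L)\to M\setminus Z$ is of class $C^1$ with $|\psi_x'|\equiv 1$, and it extends continuously to $\widehat{\psi}_x\colon[0,L]\to M$ by setting $\widehat{\psi}_x(L)=p\in\partial Z$, exactly as in the proof of Proposition~\ref{propKu}(1).

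Define $\gamma\colon[0,L]\to M$ by $\gamma(s):=\widehat{\psi}_x(L-s)$.  Then $\gamma(0)=p\in Z$, $\gamma(s)\in M\setminus Z$ for $s\in(0,L]$, $\gamma$ is continuous on $[0,L]$ and $C^1$ on $(0,L]$ with $|\gamma'|\equiv 1$, which is precisely the regularity required of a relative $C^1$ morphism as defined in Section~\ref{S:PRELIM}.  Setting $\varepsilon=L$ (or any smaller positive number) produces the desired curve.  The only technical point to verify is that the surjectivity in Proposition~\ref{P:existH}(2) descends to a single \KL-nondegenerate point; this is transparent, since every ingredient of that argument (finite trajectory length, existence of the forward limit, properness and density of $R\bigr|_H$) is established in a neighborhood of $p$ using only the local triple $(\rho,U,\Psi)$.
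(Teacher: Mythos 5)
Your proposal is correct and follows essentially the same route as the paper, whose entire proof is the single sentence "take $\gamma$ to be the parametrization by arc length of the integral curve of $-\nabla f$ having $p$ as a limit point," relying implicitly on the surjectivity of $R\bigr|_H$ from Proposition~\ref{P:existH}(2) just as you do. You have merely filled in the details (local validity of the surjectivity, the continuous extension to the endpoint, the reversal of orientation, and the check of the relative $C^1$ condition), all of which are consistent with the paper's argument.
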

 \begin{proof} Take $\gamma$ to be  the parametrization by arc length of the integral curve of $-\nabla f$ having  $p$ as a limit point.
 \end{proof}

We come now to the main result in this section, which finishes the proof of Theorem~\ref{T:MAIN_THM}. Recall that the function $\widehat{f}$ was defined in \eqref{ftilde} above.


\begin{prop} \label{MT}The set $\widehat{f}^{-1}([0,1))$  is a $C^1$ regular
	 open mapping cylinder neighborhood of $f^{-1}(0)$.
\end{prop}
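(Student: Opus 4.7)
The plan is to construct an explicit homeomorphism from the closed region $\widehat{f}^{-1}([0,1])\setminus\inte(Z)$ onto the mapping cylinder $M_{R|_H}$ of $R|_H\colon H\to \partial Z$, using the trajectories of $-\nabla f$ as the fibers of the cylinder and $\widehat{f}$ as a normalized ``time'' coordinate that equals $1$ on the hypersurface $H$ and vanishes continuously on $Z$. The statement about $\widehat{f}^{-1}([0,1))$ then follows by passing to interiors, and the $C^1$-regularity is automatic since $H$ is already a $C^1$ hypersurface by Proposition~\ref{P:existH}.

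First I would define a projection
$\Pi\colon \widehat{f}^{-1}([0,1])\setminus Z\to H$
sending each $x$ to the unique intersection point of the trajectory $\alpha_x$ with $H$; existence and uniqueness of this point is exactly Proposition~\ref{P:existH}(1), and continuity of $\Pi$ follows from continuous dependence of the flow on initial conditions (Theorem~\ref{T:ODE1}) combined with the transversality of $-\nabla f$ to $H$. I would then set
\[
\varphi(x)=\begin{cases}[(\Pi(x),\widehat{f}(x))], & x\in \widehat{f}^{-1}([0,1])\setminus Z,\\ {[x]}, & x\in \partial Z,\end{cases}
\]
with inverse $[(h,t)]\mapsto \sigma_h(t)$, where $\sigma_h(t)$ is the unique point on $\alpha_h$ with $\widehat{f}$-value $t\in (0,1]$ and $\sigma_h(0):=R(h)$. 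Bijectivity follows from the strict monotonicity of $\widehat{f}$ along trajectories (established in the proof of Proposition~\ref{P:existH}) together with the surjectivity of $R|_H$ in item~(2) of the same proposition.

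Continuity of $\varphi$ on $\widehat{f}^{-1}(0,1]$ is immediate from continuity of $\Pi$ and of $\widehat{f}$. At a point $p\in \partial Z$, for any sequence $x_n\to p$ one has $\widehat{f}(x_n)\to 0$ by the continuous extension of $\widehat{f}$ by zero on $Z$ recorded in Proposition~\ref{P:existH}, and $R(\Pi(x_n))=R(x_n)\to R(p)=p$ by continuity of the retraction $R$ (Proposition~\ref{propKu}(2)), since $\Pi(x_n)$ lies on the same trajectory as $x_n$; this yields $\varphi(x_n)\to [p]$ in the mapping cylinder topology.

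The main obstacle will be the continuity of $\varphi^{-1}$ at a point $[p]\in\partial Z$: given $[(h_n,t_n)]\to [p]$, i.e.\ $t_n\to 0$ and $R(h_n)\to p$, I must show $\sigma_{h_n}(t_n)\to p$. The triangle inequality and Proposition~\ref{propKu}(3) give
\[
d(\sigma_{h_n}(t_n),p)\leq L(\sigma_{h_n}(t_n))+d(R(h_n),p),
\]
while the length bound of Proposition~\ref{EX:BOUNDED_LENGTHS} yields $L(\sigma_{h_n}(t_n))\leq \Psi(f(\sigma_{h_n}(t_n)))$. The key observation here is that $\widehat{\Phi}$ is constant along each trajectory, and on $H$ satisfies $\widehat{\Phi}=1/f$; hence along $\alpha_{h_n}$ one has $f(\sigma_{h_n}(t_n))=t_n\,f(h_n)\leq t_n\rho\to 0$. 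Continuity of $\Psi$ at $0$ then forces both terms to vanish, which closes the argument.
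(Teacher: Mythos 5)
Your overall strategy is the same as the paper's (the trajectories of $-\nabla f$ are the cylinder fibers), and your use of $\widehat{f}$ itself as the cylinder coordinate is a genuine simplification: the identity $f(\sigma_h(t))=t\,f(h)$, which follows from $\widehat{\Phi}$ being constant along trajectories and equal to $1/f$ on $H$, is correct and gives a clean length estimate. The continuity of $\varphi^{-1}$ is also essentially right, provided you check it by precomposing with the quotient map $q\colon (H\times[0,1])\sqcup \partial Z\to M_{R|_H}$ (the quotient need not be first countable, but $H\times[0,1]$ is metrizable, so your sequential estimate applies there verbatim).

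There is, however, a genuine gap in your continuity argument for $\varphi$ at a point $p\in\partial Z$. You assert that $\widehat{f}(x_n)\to 0$ together with $R(\Pi(x_n))\to p$ yields $\varphi(x_n)=[(\Pi(x_n),\widehat{f}(x_n))]\to [p]$ in the mapping cylinder topology. This implication is false in general: in the quotient topology an open neighborhood of $[p]$ need not contain any set of the form $\{[(h,t)] : R(h)\in V,\ t<\varepsilon\}$ with $\varepsilon>0$ uniform in $h$ (take $R|_H$ constant with $H=\mathbb{R}$ and the open set $\{[(h,t)]: t<1/(1+|h|)\}\cup\{[p]\}$; the sequence $[(n,1/n)]$ satisfies your hypotheses but does not converge to $[p]$). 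The missing ingredient is precisely the properness of $R|_{H}$ from Proposition~\ref{P:existH}(2), which you never invoke: since $R(\Pi(x_n))\to p$ and $\partial Z$ is locally compact, the points $\Pi(x_n)$ eventually lie in the compact set $(R|_H)^{-1}(K')$ for a compact neighborhood $K'\ni p$, and then continuity of $q$ on the metrizable space $H\times[0,1]$, applied along convergent subsequences of $\Pi(x_n)$, gives the desired convergence. The paper packages this same input differently: it builds the map \emph{from} the cylinder \emph{to} the neighborhood, shows it is a proper continuous bijection onto a locally compact Hausdorff space, and concludes it is a homeomorphism; either route works, but some appeal to properness is unavoidable.
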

\begin{proof} Recall that $Z:=f^{-1}(0)=\widehat{f}^{-1}(0)$. Let $M_R:=H\times [0,1]\sqcup \partial Z/_{\sim}$ be the closed mapping cylinder of $R\bigr|_{H}:H\ra  \partial Z$. Denote $\partial^{[0,1]}\widehat{f}:=\widehat{f}^{-1}([0,1])\setminus \Int f^{-1}(0)=\widehat{f}^{-1}(0,1]\cup \partial Z$.

The right-end function $q\ra \omega_q^+$ for the initial value problem for the vector field $-\frac{\nabla f}{|\nabla f|^2}$ is  a continuous function from $V\setminus f^{-1}(0)$ to $(0,\infty)$. In fact, $\omega_q^+=f(q)$.

We can then fix a homeomorphism
\begin{align}
\label{EQ:HOMEO__NU}
\upsilon\colon [0,1]\times H\ra \{(t,q)\mid q\in H, \; t\in[0,\omega_q^+]\}\subset \bR\times H
\end{align}
such that $\pi_2(\upsilon(t,q))=q$ and $\upsilon(0,q)=(0,q)$.

Define the function
\begin{align*}
	\Phi\colon M_R & \ra \partial^{[0,1]}\hat{f}\\[5pt]
	\Phi(q,t) & =
	\begin{cases}
	\gamma_q(\upsilon(1-t,q)),& \text{ if } t\in (0,1],\\
	\displaystyle\lim_{s\ra \omega_q^+} \gamma_q(s), & \text{ if } t=0,
	\end{cases}
\end{align*}
where $\gamma_q$ is the solution of the Cauchy problem for $-\nabla f/|\nabla f|^2$ with initial value $q\in H$. Note that $\Phi$ is defined at points $p=R(q)\in \partial Z$, in which case 
$\Phi(p)=\Phi(q,0)=R(q)=p$.
We also have that $\Phi\bigr|_{H\times \{1\}}=\id_{H}$.

 Clearly, $\Phi\bigr|_{H\times (0,1]}$ is a continuous bijection onto $\widehat{f}^{-1}(0,1]$, while $\Phi$ is the identity on $\partial Z$. Hence, $\Phi$ is a bijection.
 It remains to prove that $\Phi$ and $\Phi^{-1}$ are continuous.
 
 Recall that, by item (2) in Proposition~\ref{P:existH}, the map $R\bigr|_{H}$ is surjective. Then, we may identify the mapping cylinder $M_R$ with $H\times [0,1]/_{\sim}$, where  $(q_1,0)
 \sim(q_2,0)$ if and only if $R(q_1)=R(q_2)$.  Modulo this identification, and modulo the homeomorphism $\upsilon$ defined in \eqref{EQ:HOMEO__NU}, the function $\Phi$ is the induced bijection  of the extension to $\{(\omega_q^+,q)~|~q\in H\}$ of the flow function of $-\frac{\nabla f}{|\nabla f|^2}$, a priori defined  on 
 \[
 D:=\{(t,q)\mid q\in H, \; t\in[0,\omega_q^+)\}.
 \] 
 More precisely, if $\phi\colon D\ra \partial^{[0,1]}\hat{f}$ is the flow function, then $\phi$ extends \emph{continuously} to a surjective function 
 \[ 
 \widetilde\phi\colon\tilde{D}\ra \partial^{[0,1]}\hat{f},
 \] 
 where $\widetilde{D} :=\{(t,q)\mid q\in H, \; t\in[0,\omega_q^+]\}$. The continuity of $\widetilde{\phi}$ is proved in the same manner as the continuity of the retraction $R\colon V\ra \partial Z$ in Proposition~\ref{propKu}, while surjectivity is clear. By a standard result in point-set topology, this induces a continuous bijection $\tilde{D}/_{\sim'}\ra \partial^{[0,1]}\hat{f}$, where $\sim'$ identifies the points in the same fiber, i.e., \emph{mutatis mutandis}, $\sim'\equiv\sim$ in $H\times [0,1]/_{\sim}$. Adding to this the fact that $\tilde{\phi}$ is proper, which follows along the same lines as the properness of $R\bigr|_{H}$ in the proof of Proposition~\ref{P:existH}, we  conclude the continuity of the inverse, and hence of $\Phi^{-1}$.
  \end{proof}

\section{Remarks on the K{\L} nondegeneracy condition}
\label{Sec5}

The {\KL} condition was introduced by Kurdyka in \cite{K} in the o-minimal context, where it served as the appropriate generalization of the \L ojasiewicz inequality.  The characterization of the {\KL} condition was thoroughly studied in a nonsmooth, infinite dimensional setting in \cite{BDLM}. Theorems 18 and 20 in \cite{BDLM}, characterizing the {\KL} property, are stated for proper, lower semicontinuos, semi-convex functions on a Hilbert space.  The context of relative manifolds that we treat here is slightly different. In particular, in this section we do not assume that $f$ is semi-convex, but rather ask for $f$ to be $C^1$ on the regular part $M\setminus Z$.  Therefore one may think of the results in this section, in which we prove Theorem~\ref{T:MAIN_THM2}, as complementing those in \cite{BDLM}. As a matter of fact, one key lemma about upper semicontinuous functions  from \cite{BDLM} appears prominently here as well. We start by stating this result in a form that applies to  lower semicontinuous functions too. 


\begin{lem}[\protect{cf.~\cite[Lemma 45]{BDLM}}] \label{Lem45} Let $u\colon (0,r_0]\ra (0,\infty)$ be a  locally integrable,
semicontinuous function. Then there exists a continuous function $w\colon (0,r_0]\ra (0,\infty)$ such that
\begin{enumerate}
\item $w\leq u$, if $u$ is lower semicontinuous; 
\item  $u\leq w$, if $u$ is upper semicontinuous; and
\item $|u-w|\in L^1(0,r_0)$.  
\end{enumerate}
\end{lem}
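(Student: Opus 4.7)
The plan is to treat the two semicontinuity cases separately by a local-to-global construction: build continuous approximants of $u$ on an overlapping compact cover of $(0,r_0]$, then patch them together with a partition of unity. I would cover $(0,r_0]$ by the intervals $I_0=(r_0/2,r_0]$ and $I_n=(r_0\cdot 2^{-n-1},\,r_0\cdot 2^{-n+1})$ for $n\geq 1$, so that each point of $(0,r_0]$ lies in at most two $I_n$, and fix a continuous partition of unity $\{\phi_n\}$ subordinate to $\{I_n\}$ with $\supp\phi_n\subset\overline{I_n}$.

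Assume first that $u$ is upper semicontinuous. On the compact set $\overline{I_n}$, $u$ attains a finite maximum, and by the classical Baire approximation theorem for USC functions it is the pointwise decreasing limit of continuous (indeed Lipschitz) functions $g_k\geq u$; a concrete choice is $g_k(x)=\sup_{y\in\overline{I_n}}[u(y)-k|x-y|]$. Since $g_1$ is bounded and $g_k-u\downarrow 0$, dominated convergence gives $\int_{\overline{I_n}}(g_k-u)\,dx\to 0$, so one may select $k_n$ such that $h_n:=g_{k_n}$ satisfies $h_n\geq u$ and $\int_{\overline{I_n}}(h_n-u)\,dx\leq 2^{-n}$. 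Define
\[
w:=\sum_{n\geq 0}\phi_n\, h_n.
\]
Local finiteness of the cover makes $w$ continuous on $(0,r_0]$; since $\sum_n\phi_n\equiv 1$ with $\phi_n\geq 0$ and $h_n\geq u$ on $\overline{I_n}\supset\supp\phi_n$, one obtains $w\geq u$, whence also $w>0$; and
\[
\int_0^{r_0}(w-u)\,dx=\sum_n\int_{I_n}\phi_n(h_n-u)\,dx\leq\sum_n 2^{-n}<\infty.
\]

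The lower semicontinuous case is dual, with one mild subtlety. Now $u$ attains a positive minimum $m_n>0$ on $\overline{I_n}$ and is the pointwise increasing limit of continuous functions $g_k\leq u$ (e.g.\ $g_k(x)=\inf_y[u(y)+k|x-y|]$). Replacing $g_k$ by $\tilde g_k:=\max(g_k,m_n/2)$ preserves $\tilde g_k\leq u$ and secures the strict lower bound $\tilde g_k\geq m_n/2>0$, while dominated convergence (dominant $u\in L^1(\overline{I_n})$) still gives $\int_{\overline{I_n}}(u-\tilde g_k)\,dx\to 0$. Pick $h_n:=\tilde g_{k_n}$ with $m_n/2\leq h_n\leq u$ and $\int_{\overline{I_n}}(u-h_n)\,dx\leq 2^{-n}$; then $w:=\sum_n\phi_n h_n$ is continuous, satisfies $0<w\leq u$ (strict positivity holds because at every point at least one $\phi_n(x)h_n(x)$ is strictly positive), and $\int_0^{r_0}(u-w)\,dx\leq\sum_n 2^{-n}<\infty$.

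The only place where any care is required is the preservation of strict positivity in the lower semicontinuous case, which is handled by the truncation at $m_n/2$; everything else is a routine combination of Baire's approximation theorem for semicontinuous functions with a standard partition-of-unity patching argument.
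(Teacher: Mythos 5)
Your proof is correct, but it follows a genuinely different route from the paper's. The paper partitions $(0,r_0]$ into abutting closed intervals $[a_{k+1},a_k]$, approximates $u$ on each by a Moreau envelope $e_\lambda^u$ with $L^1$-error at most $(k+1)^{-2}$, and then must explicitly repair continuity at the junction points $a_k$ by multiplying by an affine factor $\lambda_k$ on a small sub-interval $[a_k-\varepsilon_k,a_k]$, choosing $\varepsilon_k$ so that this correction contributes another summable error; moreover the paper only writes out the lower semicontinuous case in detail, deferring the other to \cite{BDLM}. You instead use an \emph{overlapping} dyadic cover with a subordinate partition of unity and the Lipschitz (Pasch--Hausdorff) regularizations $\sup_y[u(y)-k|x-y|]$, $\inf_y[u(y)+k|x-y|]$ in place of the quadratic Moreau envelope. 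This buys you automatic continuity of $w=\sum_n\phi_nh_n$ (no boundary-matching step) at essentially no cost, since the one-sided inequalities $h_n\geq u$ (resp.\ $h_n\leq u$) survive convex combination, and the error estimate $\int\sum_n\phi_n|h_n-u|\leq\sum_n 2^{-n}$ is immediate by Tonelli; you also treat both semicontinuity cases symmetrically. Two cosmetic remarks: you should take $\supp\phi_n\subset I_n$ (open), not merely $\overline{I_n}$, so that $\phi_nh_n$ extended by zero is continuous on all of $(0,r_0]$; and in the lower semicontinuous case the truncation at $m_n/2$ is superfluous, since $g_k(x)=\inf_{y\in\overline{I_n}}[u(y)+k|x-y|]\geq m_n>0$ automatically. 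Neither point affects the validity of the argument.
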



\begin{remark}
Note that the local integrability condition in the statement of the lemma is automatically satisfied when $u$ is upper semicontinuous, in which case $u$ is clearly locally bounded above.
\end{remark}


\begin{proof}[Proof of Lemma \ref{Lem45}] We follow the  proof of  Lemma 45  in \cite{BDLM} and include the details when $u$ is lower semicontinuous. 

 Let $\{a_k\}\subset (0,r_0)$ be a strictly decreasing sequence with $a_k\searrow 0$. We may justify the existence, for each $k$, of a continuous function $w_k\colon [a_{k+1},a_k]\ra (0,\infty)$ satisfying
\begin{align}
 w_k\leq u \label{IT:LEM5.1_A}
\end{align}
and
\begin{align}
	\int_{a_k}^{a_{k+1}} w_k\geq \int_{a_k}^{a_{k+1}}u -\frac{1}{(k+1)^2} \label{IT:LEM5.1_B}	
\end{align}

using \emph{Moreau envelopes } $e_{\lambda}^{u}$
for lower semicontinuous functions (see \cite[Ch.\ 1.G]{RW}). These are given by
\[
	e_{\lambda}^u(x)=\inf_{w}\left\{u(w)+\frac{1}{2\lambda}|w-x|^2\right\}\leq u(x).
\] 
The envelope $e_{\lambda}^u$ of a strictly positive lower semicontinuous function $u$  defined on a compact set is a strictly positive continuous  function.  When $\lambda\searrow 0$, one has $e_{\lambda}^u(x)\nearrow u$ pointwise. The  integrability of $u$  on $[a_{k+1},a_k]$ implies that $e_{\lambda}^u{{L^1\atop \xrightarrow{\hspace*{0.4cm}}}\atop \;} u$, when $\lambda \searrow 0$, so one can choose $\lambda$ such that \eqref{IT:LEM5.1_B} is satisfied for $e_{\lambda}^u=:w_k$.

  Let $ m_k:=\min\left\{\frac{w_{k-1}(a_{k})}{ w_k(a_k)},\frac{w_{k}(a_{k})}{ w_{k-1}(a_k)}\right\}$. Clearly, $0<m_k\leq 1$. 
    We construct a continuous function $w\colon (0,r_0]\ra (0,\infty)$ by induction on $k$ by letting
 \[
  w= 
  \begin{cases}
   w_k & \text{on } [a_{k+1},a_{k}-\varepsilon_k],\\
  \lambda_kw_k & \text{on } [a_k-\varepsilon_k,a_k],
  \end{cases} 
 \]
 where $\lambda_k\colon [a_k-\varepsilon_k,a_k]\ra[m_k,1]$ is the unique affine function satisfying (necessarily)  \linebreak $\lambda_k(a_k-\varepsilon_k)=1$, $\lambda_k(a_k)=m_k$, and where $\varepsilon_k>0$ is to be chosen later.
Then 
 \begin{align*}
 \int_{a_{k+1}}^{a_{k}-\varepsilon_k} w_k(r)~ dr& =\int^{a_k}_{a_{k+1}}w_k(r)~dr-\int_{a_k-\varepsilon_k}^{a_k}w_k(r)~dr \\[5pt]
 & \geq \int^{a_k}_{a_{k+1}}u(r)~dr-\frac{1}{(k+1)^2}-\int_{a_k-\varepsilon_k}^{a_k}w_k(r)~dr.
 \end{align*}
 Hence, for any choice of $\varepsilon_k>0$, we have
 \begin{eqnarray}\label{eq33}\int^{a_k}_{a_{k+1}}w(r)~dr\geq  \int^{a_k}_{a_{k+1}}u(r)~dr-\frac{1}{(k+1)^2}+\int_{a_k-\epsilon_k}^{a_k}(\lambda_k-1)w_k(r)~dr.
 \end{eqnarray}
 Choose now $0<\varepsilon_k<a_{k}-a_{k+1}$ sufficiently small so that
 \begin{align}
\label{EQ:EPSILON_K}
 \int_{a_k-\varepsilon_k}^{a_k}(1-\lambda_k(r))w_k(r)~dr\leq \frac{1}{(k+1)^2}.
 \end{align}
Since, for any choice of $\varepsilon_k>0$, one has $|(1-\lambda_k(r))w_k(r)|\leq \displaystyle\max_{[a_{k+1},a_k]}w_k$ for all $r\in [a_{k}-\varepsilon_k,a_k]$,  we may choose an $\varepsilon_k$ satisfying \eqref{EQ:EPSILON_K}. Hence, by inequality \eqref{eq33}, we get
 \[\int^{a_k}_{a_{k+1}}w(r)~dr\geq  \int^{a_k}_{a_{k+1}}u(r)~dr-\frac{2}{(k+1)^2}.
 \]
Therefore, by induction,
 \[
  \int_{a_{k+1}}^{r_0}w(r)~dr\geq  \int_{a_{k+1}}^{r_0}u(r)~dr-2\sum_{i=1}^{k+1}i^{-2}.
 \]
 Since, on every $[a_{k+1},a_k]$, we have $w\leq w_k\leq u$, we get that 
 \[
 0\leq \int_0^{r_0} (u(r)-w(r))~dr\leq 2\sum_{{i=1}}^{{\infty}}i^{-2}.
 \]
\end{proof}

	With Lemma~\ref{Lem45} and the following definition in hand, we are ready to prove Theorem~\ref{T:MAIN_THM2}, the second main result of this note. 
	
	
	  \begin{definition}
	  \label{DEF:SIM_WKL NONDEG_PT}
	  	
	  Let $(M,Z)$ be a relative manifold and let $f\colon (M,Z)\ra ([0,\infty),\{0\})$ be a relative $C^1$ function.
		\begin{enumerate}
			\item A point $p\in \partial Z$ with a neighborhood $U_p$ such that $\nabla_x f\neq 0$ for all $x\in U_p\setminus Z$ will be called \emph{simple nondegenerate}.
			\item 
			A simple nondegenerate point $p\in \partial Z$ is \textit{weakly K\L-nondegenerate }if there exists a triple $(\rho,U,\Psi)$ with $\rho>0$, $U\subset M$ an open neighborhood of $p$,
			and  $\Psi\colon([0,\rho],\{0\})\ra ([0,\infty),\{0\})$  a relative function  which is nondecreasing and absolutely continuous  on $[0,\rho]$, and   such that  
			\begin{equation}
			\label{wkl}|\nabla_x(\Psi\circ f)|\geq 1 \text{ for all }~x\in f^{-1}(D_{\Psi})\cap U,
			\end{equation}
			where $D_{\Psi}\subset (0,\rho)$ are the points of differentiability of $\Psi$.
		\end{enumerate}  
	\end{definition}

\subsection*{Proof of Theorem~\ref{T:MAIN_THM2}} For ease of reference, let us recall the statement of the theorem.
  

  \begin{theorem}\label{KLchara} The following are equivalent for a simple nondegenerate point $p\in \partial Z$:
  \begin{enumerate}
  \item The point $p\in \partial Z$ is  K{\L} nondegenerate,\label{T:KLchara_A}
  \item The point $p\in \partial Z$ is weakly K{\L} nondegenerate,\label{T:KLchara_B}
  \item There exists a compact neighborhood $K\ni p$ such that the  upper semicontinuous  function
  \[\alpha^K(t)=\displaystyle{\frac{1}{\displaystyle\inf_{x\in f^{-1}(t)\cap K_p}|\nabla_x f|}}
  \]
   is in $L^1(0, \rho)$ for some $\rho>0$. \label{T:KLchara_C}
  \item There exists an open  neighborhood $U\ni p$, a positive number $\rho>0$,  and a continuous function $a\colon (0,\rho]\ra (0,\infty)$ such that  $a^{-1}\in L^1(0,\rho)$, and
   \begin{align}\label{eq27_BIS} |\nabla_xf|\geq a(f(x)) \text{ for all } x\in U\setminus Z.
   \end{align}
   \label{T:KLchara_D}
  \end{enumerate}
  \end{theorem}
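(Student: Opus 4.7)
My plan is to close the cycle $(1) \Rightarrow (2) \Rightarrow (3) \Rightarrow (4) \Rightarrow (1)$. Two of the links, $(1) \Rightarrow (2)$ and $(4) \Rightarrow (1)$, are essentially formal manipulations of the desingularizing function $\Psi$; the step $(2) \Rightarrow (3)$ turns the weak gradient inequality into a pointwise control of $\alpha^{K_p}$; and the decisive step is $(3) \Rightarrow (4)$, where Lemma~\ref{Lem45} is exactly the right tool to upgrade an $L^1$ upper semicontinuous majorant to a continuous one.

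For $(1) \Rightarrow (2)$, given the triple $(\rho, U, \Psi)$ from K\L-nondegeneracy, I would restrict to $[0, \rho']$ with $\rho' < \rho$ and observe that the fundamental theorem of calculus on $[\epsilon, \rho']$, combined with continuity of $\Psi$ at $0$ and the monotonicity afforded by $\Psi' > 0$, yields $\int_0^{\rho'} \Psi'(s)\,ds = \Psi(\rho')$ by taking $\epsilon \searrow 0$, which is the defining identity of absolute continuity for an increasing function. For $(2) \Rightarrow (3)$, I would pick a compact neighborhood $K_p$ of $p$ contained in the open set $U$ from weak K\L-nondegeneracy and rewrite the hypothesis as $|\nabla_x f| \geq 1/\Psi'(f(x))$ for $x \in f^{-1}(D_\Psi) \cap U$. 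Taking the infimum over $x \in f^{-1}(t) \cap K_p$ gives $\alpha^{K_p}(t) \leq \Psi'(t)$ for every $t \in D_\Psi$ (trivially when $f^{-1}(t) \cap K_p = \emptyset$, since then $\alpha^{K_p}(t) = 0$). Since $\Psi$ is absolutely continuous, $\Psi' \in L^1(0, \rho)$, hence $\alpha^{K_p} \in L^1(0, \rho)$.

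The main step $(3) \Rightarrow (4)$ proceeds as follows. To handle the positivity requirement of Lemma~\ref{Lem45}, I would apply it not to $\alpha^{K_p}$ itself but to the upper semicontinuous, strictly positive, $L^1$ function $u := \alpha^{K_p} + 1$. The lemma delivers a continuous $w\colon (0, \rho] \to (0, \infty)$ with $w \geq u \geq \alpha^{K_p}$ and $w - u \in L^1(0, \rho)$, whence $w \in L^1(0, \rho)$. Set $U := \inte(K_p)$ and $a := 1/w$, continuous and positive on $(0, \rho]$; then, for $x \in U \setminus Z$ with $f(x) = t$,
\[
|\nabla_x f| \;\geq\; \inf_{y \in f^{-1}(t) \cap K_p}|\nabla_y f| \;=\; \frac{1}{\alpha^{K_p}(t)} \;\geq\; \frac{1}{w(t)} \;=\; a(f(x)),
\]
and $a^{-1} = w \in L^1(0, \rho)$. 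Finally, $(4) \Rightarrow (1)$ is direct: set $\Psi(t) := \int_0^t a(s)^{-1}\,ds$, which is continuous on $[0, \rho)$ with $\Psi(0) = 0$, is $C^1$ on $(0, \rho)$ with $\Psi'(t) = 1/a(t) > 0$, and which satisfies $|\nabla_x (\Psi \circ f)| = \Psi'(f(x))|\nabla_x f| \geq 1$ on $U \setminus Z$ by hypothesis $(4)$.

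I do not anticipate a serious obstacle, but two small points deserve care. First, Lemma~\ref{Lem45} requires strict positivity of its input on $(0, \rho]$, while $\alpha^{K_p}$ may vanish at values $t$ for which $f^{-1}(t) \cap K_p$ is empty; the additive shift $\alpha^{K_p} + 1$ sidesteps this without any geometric input. Second, to read the key inequality in $(3) \Rightarrow (4)$ one needs that the infimum $\inf_{y \in f^{-1}(t) \cap K_p}|\nabla_y f|$ is strictly positive whenever the set is nonempty; this follows from simple nondegeneracy (ensuring $|\nabla f| > 0$ on $K_p \setminus Z$ for $K_p$ small enough) together with compactness of $f^{-1}(t) \cap K_p$ for $t > 0$.
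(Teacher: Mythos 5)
Your proposal is correct and follows essentially the same route as the paper: the same cycle $(1)\Rightarrow(2)\Rightarrow(3)\Rightarrow(4)\Rightarrow(1)$, with the monotone-convergence/FTC argument for absolute continuity of $\Psi$, pointwise comparison $\alpha^{K}\leq\Psi'$ for $(2)\Rightarrow(3)$, Lemma~\ref{Lem45} for $(3)\Rightarrow(4)$, and $\Psi(t)=\int_0^t a^{-1}$ for $(4)\Rightarrow(1)$. Your additive shift $\alpha^{K_p}+1$ to guarantee strict positivity before invoking Lemma~\ref{Lem45} is a small, sensible refinement of a point the paper glosses over.
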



     \begin{proof} 
     To show that $\eqref{T:KLchara_A}\Rightarrow \eqref{T:KLchara_B}$, it is enough to prove that a continuous and increasing $\Psi$ which is $C^1$ on $(0,\rho)$  is in fact absolutely continuous on $[0,\rho/2]$, since condition (K{\L}2) in Definition \ref{DEF:KL_FUNCTION} will trivially imply condition (\ref{wkl}). For this, it is enough to check that the relation
  \[ 
  	\Psi(t)-\Psi(s)=\int_{s}^t\Psi'(r)~dr,
  \]
   valid a priori for $\rho/2\geq t\geq s>0$, also holds for $s=0$. Since $\Psi'\geq 0$, it follows that for any decreasing sequence $s_n\searrow 0$ the sequence of functions $g_{n}(r):=\xi_{[s_n,t]}\Psi'(r)$ is pointwise increasing and nonnegative. Hence, by the Monotone Convergence Theorem,
   \[ 
   \int_{s_n}^t\Psi'(r)~dr=\int_{[0,t]} g_n(r)~dr\longrightarrow \int_0^t\Psi'(r)~dr.
   \]
Due to the continuity of $\Psi$ in $0$, we get 
\[\Psi(t)-\Psi(0)=\Psi(t)-\lim_{n\ra \infty}\Psi(s_n)=\lim_{n\ra \infty}\int_{s_n}^t\Psi'(r)~dr=\int_0^t\Psi'(r)~dr.
\]
By the Fundamental Theorem of Calculus, $\Psi$ is absolutely continuous on $[0,\rho/2]$.

To prove that $\eqref{T:KLchara_B}\Rightarrow \eqref{T:KLchara_C}$, note first that the upper semicontinuity of the function
\[
\alpha^K(t)=\sup_{x\in f^{-1}(t)\cap K}\frac{1}{|\nabla_x f|}
\]
for every compact set $K$ is straightforward.
Since $|\Psi'|=\Psi'$ on $D_{\Psi}$, one concludes that $|\nabla_x(\Psi\circ f)|\geq 1$ implies that $\Psi'(f(x))\geq \frac{1}{|\nabla_x f|}$ for all $x\in f^{-1}(D_{\Psi})\cap U$. Therefore,
\[ 
	\Psi'(t)\geq \displaystyle\sup_{x\in f^{-1}(t)\cap U}\frac{1}{|\nabla_x f|} \text{ for all } t\in D_{\Psi}.
\]
   Clearly, this implies that for every compact neighborhood $K\ni p$ with $K\subset U$ we have 
   \[
   \Psi'(t)\geq \alpha^K(t).
   \]
   Integrating on $[0,\rho]$ and using the Fundamental Theorem of Calculus, we get that $\alpha^K\in L^1(0,\rho).$
   
   For $\eqref{T:KLchara_C}\Rightarrow \eqref{T:KLchara_D}$, we use Lemma~\ref{Lem45} with $u=\alpha^K$, for which we need that $\alpha^K$ is finite valued. Then there exists continuous function $\widetilde{a}\colon (0,\rho]\ra (0,\infty)$ such that $\widetilde{a}\geq \alpha^K$ and $\int_0^\rho \tilde{a}<\infty$.
    Let $a:=1/\tilde{a}$.
    This means that
   \[
    |\nabla_x f|\geq \inf_{x\in f^{-1}(f(x))\cap K} |\nabla_x f|=(\alpha^K(f(x)))^{-1}\geq a(f(x)) \text{ for all }  x\in K.
   \]
   To obtain the desired conclusion, we take $U$ to be the interior of $K$.
   
   Finally, we show  $\eqref{T:KLchara_D}\Rightarrow \eqref{T:KLchara_A}$. Define, for $t\geq 0$,
   \[
   \Psi(t)=\int_0^{t} \frac{1}{a(r)}~dr.
   \]
   Clearly, $\Psi$ is  increasing and $\Psi(0)=0$. For $t>0$, one has $\Psi'(t)=1/a(t)$, since we can write 
   \[
	\Psi(t)=\int_s^{t} \frac{1}{a(r)}~dr+\int_0^s \frac{1}{a(r)}~dr
   \] 
   for some fixed $0<s<t$. In other words, $\Psi\big|_{(0,\rho)}$ is $C^1$. Then \eqref{eq27_BIS} can be written as 
   \[
   |\nabla_x f|\geq \frac{1}{\Psi'(f(x))},
   \]
   which is the same as $|\nabla_x(\Psi\circ f)|\geq 1$.
  \end{proof}

\begin{rem} The equivalence of items \eqref{T:KLchara_A} and \eqref{T:KLchara_C} in Theorem~\ref{KLchara} appears also in \cite[Theorems 18 and 20]{BDLM} when $f$ is assumed semi-convex, but only lower semicontinuous.
\end{rem}


\begin{rem} 
	The continuity of the function $a$ in item \eqref{T:KLchara_D} in Theorem~\ref{KLchara} is not essential. In fact, one can substitute it with the following condition.
  	\begin{itemize}
  	\item[$(4')$] There exists an open  neighborhood $U\ni p$, a positive number $\rho>0$, and a Lebesgue measurable function $a\colon(0,\rho]\ra (0,\infty)$ such that  $a^{-1}\in L^1(0,\rho)$ and
   \begin{align}
   \label{eq28}\qquad |\nabla_xf|\geq a(f(x)) \text{ for almost all }   x\in U\setminus Z
   \end{align}
  	\end{itemize}
  	Indeed, we have the implications $\eqref{T:KLchara_D}\Rightarrow (4')\Rightarrow (2')\Rightarrow \eqref{T:KLchara_C}$, where $(2')$ is condition $\eqref{T:KLchara_B}$ in Theorem~\ref{KLchara} with \eqref{wkl} weakened to hold for almost all $x\in\; f^{-1}(D_{\Psi})\cap U$. By examining the proof of Theorem~\ref{KLchara}, we get $(2')\Rightarrow \eqref{T:KLchara_C}$, while $(4')\Rightarrow (2')$ follows the same path as $\eqref{T:KLchara_D}\Rightarrow \eqref{T:KLchara_A}\Rightarrow \eqref{T:KLchara_B}$.
   	One has to be careful that, in \eqref{T:KLchara_D}, the condition that $p$ is simple nondegenerate is redundant. This is not the case for $(4')$ as stated, which is why for Theorem~\ref{KLchara} to hold one should keep simple nondegeneracy as a ``meta-condition''.  Without it,  $(4')$  is what some authors   call \emph{gradient inequality} (see, for example, \cite[Definition 2.2]{Hu} and Remark \ref{Finrem} below).
  \end{rem}


\begin{remark}
	\label{remtrans} 
	Note that condition \eqref{T:KLchara_D} is trivially satisfied by a transnormal function since, in that context (see \cite[Lemma 1]{Wa}), the integral $\int_{c}^{d} a^{-1}$ represents the distance between the parallel level sets $c$ and $d$ of $f$. In this case the integral is finite because the distance is finite. 
\end{remark}
 
 
  \begin{remark}  We may apply Theorem~\ref{KLchara} \eqref{T:KLchara_D} when $U\setminus Z$ has a finite number of connected components $U_1,\ldots, U_k$ and, on each $U_i$, one can prove the existence of a continuous function $a_i>0$ such that $\int_0^{\rho} a_i^{-1}<\infty$
  and \eqref{eq27_BIS} is satisfied on $U_i$.  Take $a:=\min\{a_1,\ldots,a_k\}$ and note that one has $a^{-1}\in L^1(0,\rho)$. Indeed, since $\sum_{i=1}^k a_i^{-1}\in L^1(0,\rho)$,  the claim follows from the fact that
  \begin{align*}
   \frac{1}{a}<\sum_{i=1}^k\frac{1}{a_i}.
  \end{align*}
  Therefore, the function $a$ satisfies \eqref{eq27_BIS} on all of $U$.  
 For example, when $f^{-1}(c)$ is a critical level set of a Morse function $f\colon M\ra \bR$, then $|f-c|$ is a K{\L} function, since this is true for the nonnegative functions $(f-c)^+:=\max\{f-c,0\}$ and $(f-c)^-:=-\min\{f-c,0\}$ (see Example~\ref{examples} (5)).
  \end{remark}
  
  Here is a simple, if somewhat straightforward, application of what we have done so far.
  
  
\begin{cor} Let $v\in \bR^n$ be a unit-norm vector and let $M_0\subset \R^n$ be an oriented $C^2$ hypersurface with unit normal $N_x$ at $x\in M_0$ contained in the open half-space 
\[
  H_{v}^+:=\{x\in \bR^n\mid \langle x,v\rangle> 0\}.
\]
Let $H_{v}:=\partial H_v^+$
and suppose that $p\in \overline{M_0}\cap H_{v}$ is a point such that there exists a triple $(\rho,U,a)$, where $\rho>0$, $U\ni p$ is a neighborhood, and $a\colon(0,\rho)\ra (0,\infty)$ is a continuous  function with $1/a\in L^1(0,\rho)$
 and satisfying 
\begin{align*} |v-\langle v,N_x\rangle|\geq a(\langle v,x\rangle)
\end{align*}
for all  $x\in M_0\cap U\cap\{x\mid \langle v,x\rangle\in (0,\rho)\}$.
Then there exists a neighborhood $V\subset \overline{M}_0$ of $p$ and a continuous function $\pi\colon V\ra V\cap H_v$ such that $V$ is homeomorphic to the mapping cylinder of $\pi$.
\end{cor}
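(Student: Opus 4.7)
The plan is to reduce the corollary to an application of Theorem~\ref{T:MAIN_THM} via the characterization in Theorem~\ref{T:MAIN_THM2}, taking the linear function $f(x):=\langle v,x\rangle$ restricted to $M_0$ as a local \KL{} function whose zero locus within $\overline{M_0}$ is contained in $H_v$.

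First, I would build the relative $C^2$ manifold to which Theorem~\ref{T:MAIN_THM} will be applied. Choose an open ball $W\subset U$ around $p$, small enough that $W\subset\{x\mid \langle v,x\rangle<\rho\}$ and that $W\cap\overline{M_0}\setminus H_v\subset M_0$. The latter is possible because the hypothesis forces the downward gradient trajectories of $f|_{M_0}$ to have uniformly bounded length near~$p$ (cf.\ Proposition~\ref{EX:BOUNDED_LENGTHS}), precluding any self-accumulation of $M_0$ away from $H_v$ inside a sufficiently small ball. Set $M:=W\cap\overline{M_0}$ and $Z:=M\cap H_v$. Then $M$ is locally compact, $Z\subset M$ is closed and nonempty (as it contains~$p$), and $M\setminus Z$ is an open $C^2$ submanifold of~$\R^n$ with the induced Riemannian metric, so $(M,Z)$ is a relative $C^2$ manifold.

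Next, view $f(x):=\langle v,x\rangle$ as a relative $C^2$ function $(M,Z)\to([0,\infty),\{0\})$: positivity on $M\setminus Z\subset H_v^+$ and vanishing on $Z\subset H_v$ are built into the setup. The Euclidean gradient of $f$ is the constant vector $v$, so the intrinsic gradient of $f|_{M_0}$ at $x$ is the tangential projection
\[
\nabla_x^{M_0}f \,=\, v-\langle v,N_x\rangle N_x,
\]
of norm $|v-\langle v,N_x\rangle N_x|$. Reading the hypothesis as the vector-valued inequality $|v-\langle v,N_x\rangle N_x|\geq a(\langle v,x\rangle)$, this gives
\[
|\nabla_x^{M_0}f|\geq a(f(x))\quad\text{for all } x\in M\setminus Z,
\]
with $a$ continuous on $(0,\rho]$ and $a^{-1}\in L^1(0,\rho)$. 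In particular $\nabla^{M_0}f\neq 0$ on $M\setminus Z$, so $p\in\partial Z$ is simple nondegenerate; this is precisely condition~(4) of Theorem~\ref{T:MAIN_THM2}, which then yields that $p$ is \KL{}-nondegenerate and $f$ is a local \KL{} function in the sense of Definition~\ref{DEF:KL_FUNCTION}.

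Finally, the corollary to Proposition~\ref{propKu} (which removes the global hypothesis and localizes the construction around a single \KL{}-nondegenerate point), together with Propositions~\ref{P:existH} and~\ref{MT}, produces an open, forward-flow-invariant mapping cylinder neighborhood of $Z$ in $M$. Intersecting it with a still smaller neighborhood of $p$ yields the desired $V\subset\overline{M_0}$, and the required $\pi\colon V\to V\cap H_v$ is the restriction to $V$ of the retraction built from the negative gradient flow of $f$. The main obstacle I anticipate is to verify that the constructions of Propositions~\ref{P:existH} and~\ref{MT} genuinely localize to a neighborhood of the single boundary point $p$ rather than to all of $\partial Z$; this amounts to checking that the exhaustion function, partition of unity, and decreasing sequence of levels $c_n$ appearing in the proof of Proposition~\ref{P:existH} can be arranged inside $W$, which should follow from standard paracompactness arguments once the flow manifold $\mathcal{H}$ is replaced by its restriction to a small neighborhood of~$p$.
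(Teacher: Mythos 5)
The paper offers no written proof of this corollary (it is presented as a ``straightforward application''), and your overall route --- take $f(x)=\langle v,x\rangle$ on a relative manifold built from $\overline{M_0}$ near $p$, identify $|v-\langle v,N_x\rangle N_x|$ with the intrinsic gradient $\nabla^{M_0}_x f$, invoke condition (4) of Theorem~\ref{T:MAIN_THM2} to get \KL-nondegeneracy, and then apply Propositions~\ref{P:existH} and~\ref{MT} --- is clearly the intended one. Two of your worries are in fact non-issues: once you have shrunk to $W\subset U\cap\{\langle v,x\rangle<\rho\}$, the inequality $|\nabla_x f|\ge a(f(x))$ holds at \emph{every} point of $M\setminus Z$, so $f$ is a \emph{global} \KL{} function on $(M,Z)$ and Propositions~\ref{P:existH} and~\ref{MT} apply verbatim to all of $Z$; no further localization of the exhaustion function or of the levels $c_n$ is needed. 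Moreover, the neighborhood $\widehat{f}^{-1}([0,1))$ they produce is already a neighborhood of $p$ in $\overline{M_0}$, so your final step of ``intersecting with a still smaller neighborhood of $p$'' should simply be deleted --- it is not only unnecessary but unjustified, since the intersection of a mapping cylinder with a smaller open set need not be a mapping cylinder.

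The genuine gap is your justification of $W\cap\overline{M_0}\setminus H_v\subset M_0$, which is what makes $(M,Z)$ a relative manifold in the first place. Uniform boundedness of the lengths of the gradient trajectories does \emph{not} preclude $M_0$ from accumulating onto points of $H_v^+$ outside $M_0$: each trajectory individually has finite length and limits on $H_v$, but distinct sheets of $M_0$ can still pile up inside the open half-space. Concretely, in $\R^2$ with $v=e_2$, take $M_0=\bigcup_{n\ge 2}\{(x,x+1/n)\,\colon\,x>-1/n\}$; this is an embedded, oriented $C^\infty$ curve in $H_v^+$, the tangential part of $v$ has constant norm $1/\sqrt2$ (so the hypothesis holds with $a\equiv 1/\sqrt2$), $p=(0,0)\in\overline{M_0}\cap H_v$, yet every ball around $p$ meets the limit ray $\{(x,x)\colon x>0\}\subset\overline{M_0}\setminus M_0$ lying in $H_v^+$. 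In that situation $M\setminus Z$ is not a manifold and the entire machinery of Sections 3--4 is unavailable. The correct repair is not an argument but a hypothesis: one must read ``hypersurface contained in $H_v^+$'' as \emph{properly embedded in} (i.e.\ closed as a subset of) $H_v^+$, so that $\overline{M_0}\cap H_v^+=M_0$; with that reading your construction of $(M,Z)$, and the rest of your argument, goes through.
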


\subsection*{Further observations} Theorem~\ref{T:MAIN_THM}, the main result of this article, shows, if anything, that the set $Z$ cannot be too  pathological in a neighborhood of a nondegenerate K{\L} point. The next result looks at the differential inequality that is, in a certain sense,  opposite to the one in item \eqref{T:KLchara_D} of Theorem~\ref{KLchara}. By opposition to Corollary~\ref{csL} and the classical curve selection lemma  in real analytic geometry (see \cite[Ch.~3]{Mi1}) one could say that the following result is a  ``no curve selection lemma''. 

  \begin{definition} Let $p\in \partial Z$. A relative $C^1$ curve $\gamma\colon([0,\varepsilon], \{0\})\ra (M,Z)$ with $\gamma(0)=p$ is called rectifiable if $\int_0^{\epsilon}|\gamma'(t)|~dt<\infty$.
  \end{definition}
  
  For example, if $\gamma$ is $C^1$ on $(0,\epsilon)$ and there exists $C>0$ such that $|\gamma'(t)|\leq C$ for all $t\in (0,\epsilon)$ then $\gamma$ is rectifiable.
  
  \begin{lem}
  \label{ncsL}
  Let $f\colon (M,Z)\ra ([0,\infty),\{0\})$ be a $C^1$ relative function. Suppose that there exists $\rho>0$ such that
\[ 
	|\nabla_xf|\leq b(f(x)) \text{ for all } x\in f^{-1}(0,\rho)
\]
for a  continuous function $b\colon (0,\rho]\ra (0,\infty)$  satisfying $\int_0^{\rho} \frac{1}{b(t)}~dt=\infty$.

 Let $p\in  \partial Z$. Then there  exists no rectifiable curve $\gamma\colon([0,\varepsilon], \{0\})\ra (M,Z)$ with $\gamma(0)=p$.
\end{lem}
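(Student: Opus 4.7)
The plan is to argue by contradiction: assume such a relative $C^1$ curve $\gamma\colon([0,\varepsilon),\{0\})\to(M,Z)$ exists with $\gamma(0)=p$ and $|\gamma'(t)|\leq 1$ on $(0,\varepsilon)$, and derive a contradiction by integrating the reciprocal of $b$ along the composition $f\circ\gamma$. The informal picture is that the finite-speed curve $\gamma$ forces $f\circ\gamma$ to decay to $0$ slowly enough that the divergent integral $\int_0^\rho b^{-1}$ must be dominated by the parameter interval $(0,\varepsilon)$, which is absurd.

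First I would set up the auxiliary function $g(t):=f(\gamma(t))$ and verify its basic properties. Since $\gamma$ is a relative morphism, $\gamma(0)=p\in Z$ and $\gamma(t)\in M\setminus Z$ for $t\in(0,\varepsilon)$, hence $g(0)=0$ and $g(t)>0$ for $t>0$; continuity of $g$ on $[0,\varepsilon)$ follows from continuity of $f$ and $\gamma$. On $(0,\varepsilon)$, both $f$ (on $M\setminus Z$) and $\gamma$ are $C^1$, so $g$ is $C^1$ there with $g'(t)=\langle\nabla_{\gamma(t)}f,\gamma'(t)\rangle$. By shrinking $\varepsilon$ we may assume $g(t)<\rho$ for all $t\in(0,\varepsilon)$, and the hypothesized gradient bound together with $|\gamma'(t)|\leq 1$ gives the key differential inequality
\[
|g'(t)|\leq |\nabla_{\gamma(t)}f|\,|\gamma'(t)|\leq b(g(t))\qquad\text{for all }t\in(0,\varepsilon).
\]

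Next I would fix any $t_0\in(0,\varepsilon)$, set $c:=g(t_0)\in(0,\rho)$, and introduce the antiderivative
\[
F(s):=\int_s^{c}\frac{1}{b(u)}\,du,\qquad s\in(0,\rho),
\]
so that $F(c)=0$, $F$ is $C^1$ with $F'(s)=-1/b(s)$, and $F(s)\to+\infty$ as $s\to 0^+$ by the divergence hypothesis $\int_0^\rho b^{-1}=\infty$. For $t\in(0,t_0]$, the composition $F\circ g$ is $C^1$ and satisfies $\bigl|(F\circ g)'(t)\bigr|=|g'(t)|/b(g(t))\leq 1$ by the inequality from the previous paragraph. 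Integrating from $t_0$ down to $t$ and using $F(g(t_0))=0$ yields
\[
|F(g(t))|=\left|\int_{t_0}^{t}\frac{g'(\tau)}{b(g(\tau))}\,d\tau\right|\leq |t-t_0|<\varepsilon.
\]
Letting $t\to 0^+$, continuity of $g$ and the fact that $g(t)\to 0$ force $F(g(t))\to+\infty$, which contradicts the uniform bound above.

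I expect no serious obstacle: the argument only needs that $g$ inherits just enough regularity from $\gamma$ to justify the chain rule on $(0,\varepsilon)$ and that $g(t)<\rho$ there, both of which come cheaply from the relative-morphism hypothesis after shrinking $\varepsilon$. The one place to be careful is the reduction step in which one confirms $g(t)<\rho$ for small $t$ (by continuity of $g$ at $0$ with $g(0)=0$) so that the assumed inequality $|\nabla f|\leq b(f)$ is actually applicable along the whole image of $\gamma$, and the observation that $F$ is well-defined and tends to $+\infty$ at $0$ precisely because $b^{-1}$ is \emph{not} integrable near $0$.
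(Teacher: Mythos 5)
Your proposal is correct and follows essentially the same route as the paper's proof: both reduce to the scalar differential inequality $|(f\circ\gamma)'|\le b(f\circ\gamma)$ and integrate the antiderivative of $1/b$ along the curve to contradict the divergence of $\int_0^\rho b^{-1}$ (the paper's $B$ is just $-F$ in your notation). The only cosmetic difference is that you use the two-sided bound $|(F\circ g)'|\le 1$ where the paper uses the one-sided inequality $\widetilde\gamma'\le b(\widetilde\gamma)$; both yield the same contradiction.
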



\begin{proof} We will proceed by contradiction. Suppose then that such a curve $\gamma$ exists. Let $\widetilde{\gamma}:=f\circ \gamma$ and note that $\widetilde{\gamma}(0)=0.$
At a point $t>0$ we have:
 \begin{align}\label{eq22}\widetilde{\gamma}'(t)\leq |\widetilde{\gamma}'(t)|=|\langle \nabla_{\gamma(t)}f,\gamma'(t)\rangle|\leq |\nabla_{\gamma(t)}f|\cdot|\gamma'(t)|\leq b(\widetilde{\gamma}(t))\cdot|\gamma'(t)|.
 \end{align}
Fix $t_0\in (0,\varepsilon)$ such that $u_0:=\widetilde{\gamma}(t_0)\in (0,\rho)$.  Let $B(u):=\displaystyle\int_{u_0}^u\frac{1}{b(r)}~dr$ and note that 
\begin{align} 
\label{eq23}
	\lim_{u\searrow 0}B(u)=-\infty.
\end{align}

From \eqref{eq22} we get, for $t<t_0$, that
 \begin{align}
 \label{EQ:NCL_FINAL_INEQ}
 B(\widetilde{\gamma}(t))=\int_{t_0}^t\frac{\tilde{\gamma}'(s)}{b(\widetilde{\gamma}(s))}~ds\geq -\int_{t}^{t_0}|\gamma'(s)|~ds.
 \end{align}
 Since $\displaystyle\lim_{t\searrow 0}\widetilde{\gamma}(t)= 0$,  inequality \eqref{EQ:NCL_FINAL_INEQ}  contradicts \eqref{eq23} because $\gamma$ is rectifiable.
\end{proof}

Under the simple nondegeneracy condition, the functions which satisfy a differential inequality as in Lemma \ref{ncsL} have the following alternative description.

\begin{prop}
\label{eqMZ} 
Let $(M,Z)$ be a relative manifold and let $f\colon(M,Z)\ra ([0,\infty),\{0\})$ be a relative $C^1$ function. Let $p\in \partial Z$ be a simple nondegenerate point.  Then the following assertions are equivalent:
\begin{enumerate}
	\item
	\label{T:eqMZ_ITEM_A}
	 There exists $\rho>0$,  an open neighborhood $U\ni p$, and a continuous function $b\colon (0,\rho]\ra (0,\infty)$ such that 
	 \[
	\int_0^{\rho}\frac{1}{b(r)}~dr=\infty
	\]
	and
	\begin{align}
	\label{Eqnabb}
	|\nabla_xf|\leq b(f(x))\text{ for all } x\in U\setminus Z.
	\end{align}
	\item
	\label{T:eqMZ_ITEM_B}
	 There exists a compact neighborhood $K$ of $p$ such that the finite valued, lower semicontinuous function $\beta^K(t):=\displaystyle\frac{1}{\displaystyle\sup_{x\in f^{-1}(t)\cap K}|\nabla_x f|}$ satisfies $\displaystyle\int_0^{\rho}\beta^K(r)~dr=\infty$ for some $\rho>0$.
\end{enumerate}
\end{prop}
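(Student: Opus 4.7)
The plan is to mirror the argument used for the equivalence of conditions (3) and (4) in Theorem~\ref{KLchara}, with $\beta^K$ playing the role of $\alpha^K$ and invoking the \emph{lower} semicontinuous case of Lemma~\ref{Lem45} in place of the upper semicontinuous case.

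\textbf{For $(1)\Rightarrow(2)$.} I would pick a compact neighborhood $K$ of $p$ contained in $U\cap U_p$, where $U_p$ is the simple nondegeneracy neighborhood. A compactness argument analogous to the one establishing upper semicontinuity of $\alpha^K$ in the proof of Theorem~\ref{KLchara} shows that $F(t):=\sup_{x\in f^{-1}(t)\cap K}|\nabla_x f|$ is upper semicontinuous on $(0,\rho]$: any sequence $t_n\to t$ admits maximizers $x_n\in f^{-1}(t_n)\cap K$ which, by compactness of $K$, subconverge to some $x^\ast\in f^{-1}(t)\cap K\subset M\setminus Z$, and $|\nabla f|$ is continuous on $M\setminus Z$. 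After shrinking $\rho$ if necessary, $f^{-1}(t)\cap K\neq\emptyset$ for $t\in(0,\rho]$ and $F(t)>0$, so $\beta^K=1/F$ is lower semicontinuous and finite-valued. The hypothesis $|\nabla_x f|\leq b(f(x))$ immediately gives $F(t)\leq b(t)$, hence $\beta^K(t)\geq 1/b(t)$, and integrating yields $\int_0^\rho \beta^K\geq\int_0^\rho b^{-1}=\infty$.

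\textbf{For $(2)\Rightarrow(1)$.} The first step is to verify that $\beta^K$ is locally integrable on $(0,\rho]$ so that Lemma~\ref{Lem45} applies. Arranging $K\subset U_p$, for any $a>0$ the set $f^{-1}([a,\rho])\cap K$ is a compact subset of $M\setminus Z$, and continuity plus positivity of $|\nabla f|$ there give a constant $c_a>0$ with $|\nabla_x f|\geq c_a$ on this set; consequently $F\geq c_a$ and $\beta^K\leq 1/c_a$ on $[a,\rho]$. Invoking Lemma~\ref{Lem45}(1) for the lower semicontinuous $\beta^K$, I obtain a continuous $w\colon(0,\rho]\to(0,\infty)$ with $w\leq\beta^K$ and $|\beta^K-w|\in L^1(0,\rho)$. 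Since $\int_0^\rho\beta^K=\infty$ while $\int_0^\rho(\beta^K-w)<\infty$, it follows that $\int_0^\rho w=\infty$. Setting $b:=1/w$, one gets a continuous positive function with $\int_0^\rho b^{-1}=\int_0^\rho w=\infty$, and $w\leq 1/F$ gives $b\geq F$. Hence for $x\in U:=\inte(K)\setminus Z$,
\[
|\nabla_x f|\leq F(f(x))\leq b(f(x)),
\]
which is precisely \eqref{Eqnabb}.

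The only subtle point is the local integrability of $\beta^K$ — the ``dual'' of the automatic local integrability of the upper semicontinuous $\alpha^K$ in Theorem~\ref{KLchara} — and this is dispatched by the compactness/positivity argument above under the simple nondegeneracy hypothesis. Once this is in place, the two implications are direct translations of the corresponding steps in the proof of Theorem~\ref{KLchara}.
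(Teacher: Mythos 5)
Your proof is correct and follows essentially the same route as the paper: $(1)\Rightarrow(2)$ by taking reciprocals and integrating, and $(2)\Rightarrow(1)$ by applying the lower semicontinuous case of Lemma~\ref{Lem45} to $\beta^K$ and inverting the resulting continuous minorant. In fact you are slightly more careful than the paper, which does not explicitly verify the local integrability of $\beta^K$ required by Lemma~\ref{Lem45}; your compactness argument using simple nondegeneracy supplies exactly that missing detail.
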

\begin{proof} For $\eqref{T:eqMZ_ITEM_A}\Rightarrow \eqref{T:eqMZ_ITEM_B}$, let $K\subset U$ be a compact neighborhood and note that condition \eqref{Eqnabb} implies that  	$1/\beta^K(t)\leq b(t)$. We get \eqref{T:eqMZ_ITEM_B} by taking the reciprocals and integrating.

To prove $\eqref{T:eqMZ_ITEM_B}\Rightarrow \eqref{T:eqMZ_ITEM_A}$, we use Lemma~\ref{Lem45} to obtain a continuous function $\tilde{b}\colon (0,\rho]\to (0,\infty)$ with   $0<\tilde{b}\leq \beta$ such that $\displaystyle\int_0^\rho\tilde{b}=\infty$. Then $b:=\tilde{b}^{-1}\geq (\beta^K)^{-1}$ will satisfy item \eqref{T:eqMZ_ITEM_A}.
\end{proof}

\begin{example} \label{Exabb} Here is a simple example where the  situation contemplated in Proposition \ref{eqMZ}  occurs. Let $h:([0,\infty),\{0\})\ra (\bR,\{0\})$ be a continuous relative function of class $C^1$ such that the graph $\Gamma_h\subset\bR^2$ is not rectifiable at $\{0\}$, i.e.,  it has infinite length at $0$. Take for example $h(x)=x\sin(x^{-1})$ for $x>0$ and $h(0)=0$. To see that $\int_0^{\epsilon }\sqrt{1+[(h)']^2}=\infty$ one reduces to $\int_0^{\epsilon}x^{-1}|\cos(x^{-1})|~dx=\infty$ and the later follows by bounding below with the sum of integrals over the intervals where $|\cos(x^{-1})|\geq \sqrt{2}/2$, for example. 

Then $(M,Z):=(\Gamma_h, \{(0,0)\})$ is a relative manifold. Let $f$ be the restriction of the projection $(x,y)\ra x$ to $M$. Note that
 \[\qquad |\nabla _{(x,y)}f|=(1+(h'(x))^2)^{-1/2}\neq 0, \qquad\mbox{ for all } (x,y)\in M\setminus Z.\] 
 Define $b(x)=(1+(h'(x))^2)^{-1/2}$ and note that $b(f(x,y))=b(x)$ and hence we have equality in (\ref{Eqnabb}). Clearly $\int_0^{\rho} b^{-1}=\infty$ due to the choice of $h$.
\end{example}

The previous Example should be contrasted to  the following.

\begin{rem} \label{Finrem} If $n=1$ and $0\in \partial f^{-1}(0)$ is a simple nondegenerate point of a continuous  relative function $f\colon([0,\infty),\{0\})\ra ([0,\infty),\{0\})$ which is $C^1$ (on $(0,\infty)$ but not necessarily at $0$),  then $0$ is K{\L} nondegenerate. This follows by noting that $f$ is strictly increasing in a neighborhood of $0$ because $\sgn(f)$ is constant, hence bijective on some $[0,\rho]$. Moreover for $t>0$:
\[\sup_{x\in f^{-1}\{t\}}\frac{1}{|f'(x)|}=\frac{1}{f'(f^{-1}(t))}=(f^{-1})'(t),
\]
which is clearly in $L^1(0,\rho)$ owing to the fact that $f^{-1}:(f([0,\rho]),\{0\})\ra ([0,\rho],0)$ is also a continuous relative function of class $C^1$. 

We also note that for $n=1$, condition $(4')$  \emph{without the simple nondegeneracy condition} holds for every $C^1$ function on a bounded interval as one can learn from  \cite[Theorem 3.2]{Hu}. 
\end{rem}

This section suggests that the simple nondegenerate points $p\in \partial Z$, where $Z$ is the zero locus of a relative $C^1$ function, may be  divided in at most three classes:
\begin{itemize}
\item[(i)] The ``good'' ones, i.e., those that are K{\L} nondegenerate.
\item[(ii)] The ``bad'' ones, i.e., those for which there exists a compact neighborhood $K$ such that
\begin{align*}
 \int_{0}^{\rho} \frac{1}{\sup_{x\in f^{-1}(t)\cap K}|\nabla_xf|}~dt<\infty=\int_{0}^{\rho} \frac{1}{\inf_{x\in f^{-1}(t)\cap K}|\nabla_xf|}~dt
\end{align*}
for some $\rho>0$. The examples in \cite{BDLM} and \cite{BoltePauwels} are not K{\L} but are of this nature as neither  satisfies the conclusion of Lemma \ref{ncsL}. The set $Z$ is a closed convex set of $\bR^2$ while $M=\bR^2$ hence it can be reached from $M\setminus Z$ via rectifiable curves.
\item[(iii)] The ``ugly'' ones, i.e., those for which there exists a compact neighborhood $K$ such that
\begin{align*}
\int_{0}^{\rho} \frac{1}{\sup_{x\in f^{-1}(t)\cap K}|\nabla_xf|}~dt=\infty.
\end{align*}
Examples can be build in every dimension by taking $M=\Gamma_h\times \bR^{n-1}\subset \bR^2\times \bR^{n-1}$ where $h$ is taken as in example \ref{Exabb}, with $f$ the projection onto the first coordinate. 
\end{itemize}


\bibliographystyle{amsplain}
\bibliography{MCN_FINAL}

\end{document}